\newtheorem{theorem}{Theorem}[section]
\newtheorem{cor}[theorem]{Corollary}
\newtheorem{prop}[theorem]{Proposition}
\theoremstyle{definition}
\newtheorem{defi}[theorem]{Definition}
\newcommand{\cE}{\mathcal{E}}
\newcommand{\cL}{\mathcal{L}}
\newcommand{\cT}{\mathcal{T}}
\renewcommand{\d}{\mathrm{d}}
\newcommand{\D}{\mathrm{D}}
\newcommand{\der}[2]{\frac{\partial #1}{\partial #2}}
\newcommand{\disc}{\mathrm{disc}}
\newcommand{\inner}[2]{\left\langle #1 \,, #2 \right\rangle}
\newcommand{\binner}[2]{\big\langle #1 \,, #2 \big\rangle}
\newcommand{\mesh}{\mathrm{mesh}}
\renewcommand{\mod}{\mathrm{mod}}
\renewcommand{\O}{\mathcal{O}}
\newcommand{\ol}{\overline}
\newcommand{\sk}{\mathrm{skew}}
\newcommand{\hh}{{h \in (0,\infty)}}
\newcommand{\jj}{{j \in \mathbb{Z}}}
\newcommand{\nn}{{n \in \mathbb{N}}}
\DeclareMathOperator{\im}{Im}
\DeclareMathOperator{\re}{Re}
\title{Modified equations for \\ variational integrators applied to \\ Lagrangians linear in velocities}
\author{Mats Vermeeren}
\date{\normalsize Technische Universit\"at Berlin \\
\url{vermeeren@math.tu-berlin.de}}
\begin{document}

\maketitle

\begin{abstract}
\noindent\textbf{Abstract.}
Variational integrators applied to degenerate Lagrangians that are linear in the velocities are two-step methods. The system of modified equations for a two-step method consists of the principal modified equation and one additional equation describing parasitic oscillations. We observe that a Lagrangian for the principal modified equation can be constructed using the same technique as in the case of non-degenerate Lagrangians. Furthermore, we construct the full system of modified equations by doubling the dimension of the discrete system in such a way that the principal modified equation of the extended system coincides with the full system of modified equations of the original system. We show that the extended discrete system is Lagrangian, which leads to a construction of a Lagrangian for the full system of modified equations.
\end{abstract}

\section{Introduction}

An important technique to study the long-time behavior of numerical integrators is backward error analysis. This consists in finding a \emph{modified equation}, a perturbation of the original differential equation whose solutions exactly interpolate the numerical solutions. When a modified equation has been found, one can study the behavior of the numerical solutions by comparing two differential equations, rather than comparing a differential equation with a difference equation. Several long-time (near) conservation laws for symplectic integrators can be proved this way. For a detailed introduction to modified equations we refer to \cite[Chapter IX]{hairer2006geometric}.

In \cite{vermeeren2015modified} we considered modified equations for variational integrators in the case of non-degenerate Lagrangians. We gave a construction for a modified Lagrangian, which produces the modified equation as its Euler-Lagrange equation up to a truncation error of arbitrarily high order. Although the construction was new, the claim that modified equations for variational integrators are Lagrangian was not. This follows by Legendre transformation from the well-known fact that modified equations for symplectic integrators are Hamiltonian. The construction of a modified Lagrangian was combined in \cite{torre2016benefits,torre2017surrogate} with the idea of modifying integrators \citep{chartier2007numerical} to construct variational integrators of improved convergence order.

In this work we extend our previous construction to the case of degenerate Lagrangians that are linear in velocities. In this context the Legendre transformation is not invertible, so the fact that the modified equation is Lagrangian cannot be inferred in the same way from the theory of symplectic integrators. We consider Lagrangians $\cL: T\mathbb{R}^N \cong \mathbb{R}^{2N} \rightarrow \mathbb{R}$ of the form
\begin{equation}\label{lagrangian}
\cL(q, \dot{q}) = \inner{ \alpha(q) }{ \dot{q} } - H(q),
\end{equation}
where $\alpha: \mathbb{R}^N \rightarrow \mathbb{R}^N$, $H: \mathbb{R}^N \rightarrow \mathbb{R}$, and the brackets $\inner{}{}$ denote the standard scalar product. Variational integrators for such Lagrangians were studied for example in \cite{rowley2002variational} and \cite{tyranowski2014variational}. An important role will be played by the matrices
\begin{equation}\label{Askew}
A(q) = \alpha'(q) = \left( \der{\alpha_i(q)}{q_j} \right)_{\!i,j = 1,\ldots,N} \qquad \text{and} \qquad A_\sk(q) = A(q)^T - A(q) .
\end{equation}
We assume that $A_\sk(q)$ is invertible, then the Euler-Lagrange equation for $\cL$ is given by
\begin{equation}\label{cEL}
\dot{q} = A_\sk(q)^{-1} H'(q)^T,
\end{equation}
where $q$ is considered to be a column vector and $H'(q)$ is the row vector of partial derivatives of $H$ with respect to $q_1, \ldots, q_N$. In contrast to the case of non-degenerate Lagrangians, this is a first order ODE.

A well-known example where a Lagrangian of the form \eqref{lagrangian} arises is the dynamics of point vortices in the plane. We will discuss this example in detail in Section \ref{sect-vortex}. Another reason to study this class of Lagrangians is that its extension to PDEs covers several important equations. For example, the nonlinear Schr\"odinger equation is the Euler-Lagrange equation of a Lagrangian whose kinetic term is linear in the time-derivatives (see e.g.\@ \cite[Section 2.1.]{sulem2007nonlinear}). Perhaps the most general application of Lagrangians that are linear in velocities is the variational formulation in phase space of mechanics, where $\cL : T T^* Q \cong \mathbb{R}^{4N} \rightarrow \mathbb{R}$ is given by
\[ \cL(p,q,\dot{p},\dot{q}) = \inner{p}{\dot{q}} - H(p,q). \]
Its Euler-Lagrange equations are Hamilton's canonical equations 
\[ \dot{q} = \left( \der{H}{p} \right)^T \qquad \text{and} \qquad \dot{p} = - \left( \der{H}{q} \right)^T. \] 
Note that even though
$A = \begin{pmatrix}
0 & 0 \\
I & 0
\end{pmatrix}$
is singular in this case, the assumption that $A_\sk$ is invertible still holds. Like many concepts in classical mechanics, the variational principle in phase space dates back to the 19th century \cite[Chapter XXIX]{poincare1899methodes}. A modern treatment can be found for example in \cite[Section 8\---5]{goldstein1980classical}, and an application to geometric integration in \cite{leok2011discrete}.

The construction of modified Lagrangians for variational integrators, which we introduced in \cite{vermeeren2015modified}, carries over to the case of degenerate Lagrangians that are linear in velocities. However, there is a catch. The original differential equation is of first order for the Lagrangians considered here, but the difference equation produced by a variational integrator is of second order. Hence, in this context, variational integrators are two-step methods and parasitic solutions can occur.

In Section \ref{sect-integrators} we present two variational integrators which will be the protagonists of all examples discussed in this work. In  Section \ref{sect-multistep} the essentials of the theory of modified equations for multi-step methods are reviewed. In Section \ref{sect-principal} we summarize the construction of modified Lagrangians from \cite{vermeeren2015modified} and in Section \ref{sect-full} we will present a method to extend it to the full system of modified equations. In Section \ref{sect-ex} we look at some example systems.

\subsubsection*{A note on notation}

As mentioned above, we use the convention that a derivative with respect to a column vector yields a row vector. In particular, this means that the derivative of the scalar product of two column vectors is calculated as
\[ \inner{x}{y}' = \left( x^T y \right)' = x^T y' + y^T x' . \]
Later on we will be taking higher derivatives of vectors with respect to other vectors, resulting in a zoo of tensors. We want to avoid heavy notations using indices, like
\begin{equation}\label{indices}
\sum_{a} A_{a} x^a, \qquad \sum_{a,b} B_{a,b} x^a y^b, \qquad \sum_{a,b,c} C_{a,b,c} x^a y^b z^c, \qquad \cdots.
\end{equation}
If the tensor involved is symmetric, we will use the notations 
\[ A(x), \qquad B(x,y), \qquad C(x,y,z), \qquad \ldots\]
instead. If the tensor is of first or second order, we will often write these expressions as matrix multiplication, 
\[ A x \qquad \text{and} \qquad x^T B y. \]
We will also use the inner product notation $\inner{A^T}{x}$ as an alternative to $Ax$. This allows us to emphasize one particular pairing in a product of more than two tensors. 

Using these notations interchangeably allows us to write equations in an intuitive form and avoid the heavy notation of \eqref{indices}. The downside is that such inconsistent notation could be a source of confusion for the reader. We hope this note is enough to avoid that.

\section{Variational integrators}
\label{sect-integrators}

A variational integrator is a numerical integrator for Lagrangian differential equations, obtained by discretizing the Lagrange function. The action integral $\int \cL(q,\dot{q}) \,\d t$ is replaced by a sum $\sum_j L_\disc(q_j,q_{j+1},h)$. The sequence $(q_j)_\jj$ is a critical point of the action sum if and only if it satisfies the discrete Euler-Lagrange equation
\begin{equation}\label{dEL}
\D_2 L_\disc(q_{j-1},q_j,h) + \D_1 L_\disc(q_j,q_{j+1},h) = 0, 
\end{equation}
where $\D_1$ and $\D_2$ denote partial derivatives with respect to the first and second variable. Assuming this difference equation can be solved for $q_{j+1}$, it provides a numerical approximation of the Euler-Lagrange equations of $\cL$. An excellent overview of the subject of variational integrators is given by Marsden and West \cite{marsden2001discrete}.

For Lagrangians that are linear in the velocities, the continuous Euler-Lagrange equation \eqref{cEL} is of first order, but the discrete Euler-Lagrange equation \eqref{dEL} involves three points, i.e.\@ it is of second order. This means that we are dealing with two-step methods.

We will discuss two examples of variational integrators in detail. Both are obtained by using a simple quadrature rule to approximate the \emph{exact discrete Lagrangian}
\[ L_\mathrm{exact}(q_j,q_{j+1},h) = \int_{jh}^{(j+1)h} \cL(q(t),\dot{q}(t)) \,\d t, \]
where $q(jh) = q_j$, $q((j+1)h) = q_{j+1}$, and $q(t)$ solves the continuous Euler-Lagrange equation.

\subsubsection*{Midpoint rule}

Using $\frac{q_{j+1}-q_j}{2}$ to approximate $\dot{q}$ and the average $\frac{q_j+q_{j+1}}{2}$ to approximate $q$ in the integrand, we find the discrete Lagrangian
\begin{equation}\label{dlag-mp}
L_\disc(q_j,q_{j+1},h) = \inner{ \alpha\!\left( \frac{q_j+q_{j+1}}{2}\right) }{ \frac{q_{j+1} - q_j}{h} } - H\!\left( \frac{q_j+q_{j+1}}{2}\right)
\end{equation}
with discrete Euler-Lagrange equation
\[ \begin{split} 
& \frac{1}{2} \left( \frac{q_{j} - q_{j-1}}{h} \right)^T \alpha'\!\left(\frac{q_{j-1} + q_{j}}{2}\right)  + \frac{1}{2} \left( \frac{q_{j+1} - q_{j}}{h} \right)^T \alpha'\!\left(\frac{q_{j} + q_{j+1}}{2}\right) \\
& - \frac{1}{h} \alpha\!\left(\frac{q_{j} + q_{j+1}}{2}\right)^T + \frac{1}{h} \alpha\!\left(\frac{q_{j-1} + q_{j}}{2}\right)^T - \frac{1}{2} H'\!\left(\frac{q_{j-1} + q_{j}}{2}\right) - \frac{1}{2} H'\!\left(\frac{q_{j} + q_{j+1}}{2}\right) = 0. 
\end{split} \]
In case $\alpha$ is linear, i.e.\@ $\alpha(q)=Aq$, this simplifies to
\[ \frac{q_{j+1} - q_{j-1}}{2h} = A_\sk^{-1}\left(\frac{1}{2} H'\!\left(\frac{q_{j-1} + q_{j}}{2}\right)^T + \frac{1}{2} H'\!\left(\frac{q_{j} + q_{j+1}}{2}\right)^T \right), \]
where $A_\sk$ is defined in Equation \eqref{Askew}. In the case of a non-degenerate Lagrangian this discretization would lead to a variational integrator that is equivalent to the implicit midpoint rule applied to the corresponding symplectic system. Also in the present context we will refer to it as the \emph{midpoint rule}.

\subsubsection*{Trapezoidal rule}

To obtain the second discretization we use the trapezoidal quadrature rule to approximate the exact discrete Lagrangian: we take the average of the integrand evaluated with $q = q_j$ and with $q = q_{j+1}$, while still using $\frac{q_{j+1}-q_j}{2}$ to approximate the derivative $\dot{q}$. We find the discrete Lagrangian
\begin{equation}\label{dlag-t}
L_\disc(q_j,q_{j+1},h) = \inner{ \frac{1}{2} \alpha(q_j) + \frac{1}{2} \alpha(q_{j+1}) }{ \frac{q_{j+1} - q_j}{h} } - \frac{1}{2} H(q_j) - \frac{1}{2} H(q_{j+1})
\end{equation}
with discrete Euler-Lagrange equation
\[  \left( \frac{q_{j+1} - q_{j-1}}{2h} \right)^T \alpha'(q_j) - \frac{\alpha(q_{j+1})^T - \alpha(q_{j-1})^T}{2h} - H'(q_j) = 0. \]
In case $\alpha$ is linear this simplifies to
\[ \frac{q_{j+1} - q_{j-1}}{2h} = A_\sk^{-1}H'(q_j)^T. \]
This discretization is sometimes called the explicit midpoint rule, but we will not use this name to avoid confusion with the previous method. Instead we call this method the \emph{trapezoidal rule}. In the case of a non-degenerate Lagrangian the trapezoidal rule would lead to the St\"ormer-Verlet method.

\section{Modified equations for multistep methods}
\label{sect-multistep}

The classical theory of modified equations does not capture parasitic solutions of multistep methods. An extension of this theory for linear multistep methods was developed by Hairer \cite{hairer1999backward}. (See also \cite[Chapter XV]{hairer2006geometric}.) Here we mention some of the main results, restricted to the case of two-step methods.

For a first order ODE $\dot{q} = f(q)$, consider the linear two-step method
\begin{equation}\label{twostep}
\frac{ a_0 q_j + a_1 q_{j+1} + a_2 q_{j+2} }{h} = b_0 f(q_j) + b_1 f(q_{j+1}) + b_2 f(q_{j+2}) .
\end{equation}
We call the method \eqref{twostep} \emph{symmetric} if $a_0 = -a_2$, $a_1 = 0$, and $b_0 = b_2$. We say that it is \emph{stable} if all roots of the polynomial $\rho(\zeta) = a_0 + a_1 \zeta + a_2 \zeta^2$ satisfy $|\zeta| \leq 1$, and the roots with $|\zeta| = 1$ are simple. A method is stable if and only if the numerical solution for $\dot{q} = 0$ is bounded for any initial condition. Note that the trapezoidal rule is a stable symmetric linear two-step method, but that the midpoint rule is not of the form \eqref{twostep}.

The theory of modified equations for one-step methods is easily extended to yield the following. 

\begin{prop}[{Special case of \cite[Theorem XV.3.1]{hairer2006geometric}}]\label{prop-principal}
Consider a consistent method of the form \eqref{twostep}. Then there exist unique functions $(f_n(q))_\nn$ such that for every truncation index $k$, every solution of
\begin{equation}\label{principal-modeqn-tr}
\dot{q} = f(q) + h f_1(q) + h^2 f_2(q) + \ldots + h^k f_k(q)
\end{equation}
satisfies
\begin{align*}
\frac{ a_0 q(t) + a_1 q(t+h) + a_2 q(t+2h) }{h} = b_0 f(q(t)) + b_1 f(q(t+h)) + b_2 f(q(t & +2h)) \\
&+ \O(h^{k+1}). 
\end{align*}
\end{prop}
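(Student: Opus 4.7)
The plan is to determine the coefficients $f_n$ recursively by matching powers of $h$ in a formal Taylor expansion. Suppose $q(t)$ satisfies the modified ODE \eqref{principal-modeqn-tr}. Then repeated application of the chain rule to $\dot{q} = F_h(q)$, where $F_h(q) = f(q) + \sum_{n \geq 1} h^n f_n(q)$, expresses every time derivative $q^{(n)}(t)$ as a differential polynomial in $f_0 := f, f_1, \ldots, f_k$ evaluated at $q(t)$; the coefficient of $h^m$ in the resulting series depends only on $f_0, \ldots, f_m$. The same holds for $(f\circ q)^{(n)}(t)$.

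Substituting the Taylor expansions of $q(t+ih)$ and $f(q(t+ih))$ into the two-step difference \eqref{twostep} yields a formal residual
$$R_h(t) = \sum_{n \geq 0} \frac{h^{n-1}}{n!}\, \tilde\rho_n\, q^{(n)}(t) \;-\; \sum_{n \geq 0} \frac{h^n}{n!}\, \tilde\sigma_n\, (f\circ q)^{(n)}(t),$$
where $\tilde\rho_n := \sum_i a_i i^n$ and $\tilde\sigma_n := \sum_i b_i i^n$. Consistency of the method provides $\tilde\rho_0 = \rho(1) = 0$ (so the singular $n=0$ contribution vanishes) and $\tilde\rho_1 = \rho'(1) = \sigma(1) = \tilde\sigma_0 \neq 0$, so the order-$h^0$ equation collapses to $\dot{q} = f(q)$, matching the leading term of the ansatz.

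At each order $h^m$ with $m \geq 1$, the key observation is that the unknown $f_m$ enters the coefficient of $h^m$ in $R_h$ exclusively through the $n=1$ term of the first sum, contributing $\rho'(1)\, f_m(q)$: for $n \geq 2$ the prefactor $h^{n-1}$ forces us to extract the coefficient of $h^{m-n+1}$ from $q^{(n)}$, which by the observation above depends only on $f_0, \ldots, f_{m-1}$; analogously, every contribution from the second sum at order $h^m$ depends only on $f_0, \ldots, f_{m-1}$. Since $\rho'(1) \neq 0$, the equation $[h^m] R_h = 0$ solves uniquely for $f_m$ as a smooth function of $q$, completing the recursion. With the $f_n$ so constructed, the formal series $R_h$ vanishes through order $h^k$; a standard Taylor remainder estimate on compact subsets of $\mathbb{R}^N$ then converts this into the pointwise bound $\O(h^{k+1})$ stated in the proposition. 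The main obstacle is the bookkeeping required to isolate the $f_m$-dependence at each order; once that linearity in the new unknown, with nonvanishing coefficient $\rho'(1)$, has been identified, existence, uniqueness, and smoothness of the $f_n$ follow routinely.
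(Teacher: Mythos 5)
Your proposal is correct and follows essentially the argument behind the cited result: the paper itself gives no proof of Proposition \ref{prop-principal} (it simply invokes Theorem XV.3.1 of Hairer, Lubich and Wanner), and the standard proof there is exactly your recursion --- Taylor-expand the residual of \eqref{twostep} along a solution of the modified equation, use $\rho(1)=0$ and $\rho'(1)=\sigma(1)\neq 0$ from consistency, and observe that $f_m$ enters the order-$h^m$ coefficient only linearly through $\rho'(1)f_m(q)$, so it is determined uniquely order by order. Your bookkeeping of which $f_n$ appear at each order and the final truncation/remainder step are sound, so no gaps to report.
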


In general the right hand side of Equation \eqref{principal-modeqn-tr} will not converge as $k \rightarrow \infty$. Nevertheless, we will call the formal differential equation
\begin{equation}\label{principal-modeqn}
\dot{q} = f(q) + h f_1(q) + h^2 f_2(q) + \ldots 
\end{equation}
the \emph{principal modified equation}. Up to truncation errors, every solution of the principal modified equation gives a solution of the difference equation when evaluated on a mesh $t_0 + h \mathbb{Z}$. However, not every solution of the difference equation can be obtained this way. The solutions that are missed are exactly the parasitic solutions.

\begin{prop}[{Special case of Theorem XV.3.5 from \citep{hairer2006geometric}}]\label{prop-full}
Assume that the method \eqref{twostep} is stable, consistent, and symmetric. Then there exist functions $(f_n(x,y))_{n \in \mathbb{N}}$ and $(g_n(x,y))_{n \in \mathbb{N}}$ such that for every truncation index $k$, for every solution of
\begin{align}
\dot{x} &= f_0(x,y) + h f_1(x,y) + \ldots + h^k f_k(x,y) \label{trunc-principal}\\
\dot{y} &= g_0(x,y) + h g_1(x,y) + \ldots + h^k g_k(x,y), \label{trunc-parasitic}
\end{align}
with $y(0) = \O(h)$, the discrete curve $q_j = x(t+jh) + (-1)^j y(t+jh)$ satisfies
\[ \frac{ a_0 q_j + a_1 q_{j+1} + a_2 q_{j+2} }{h} = b_0 f(q_j) + b_1 f(q_{j+1}) + b_2 f(q_{j+2}) + \O(h^{k+1} ) \]
for every choice of $t$.
\end{prop}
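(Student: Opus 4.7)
The plan is to substitute the ansatz $q_j = x(t + j h) + (-1)^j y(t + j h)$ into the two-step method and separate the resulting identity according to the parity of $j$. Let $\sigma(\zeta) = b_0 + b_1 \zeta + b_2 \zeta^2$ be the analogue of $\rho$ on the right hand side, and write $S_h = e^{h D}$ for the shift operator with $D = \d/\d t$. Using $(-1)^{j+\ell} = (-1)^j (-1)^\ell$, the left hand side of the difference equation becomes
\[ \frac{1}{h} \bigl[ \rho(S_h) x + (-1)^j \rho(-S_h) y \bigr](t + j h). \]
For the right hand side, expand
\[ f(q_{j+\ell}) = \sum_{m \geq 0} \frac{(-1)^{(j+\ell) m}}{m!} f^{(m)}\bigl(x(t+(j+\ell) h)\bigr)\, y(t+(j+\ell) h)^m, \]
and separate by the parity of $m$ to obtain
\[ \sum_{m \text{ even}} \frac{1}{m!} \sigma(S_h) \bigl( f^{(m)}(x) y^m \bigr)(t+j h) + (-1)^j \sum_{m \text{ odd}} \frac{1}{m!} \sigma(-S_h) \bigl( f^{(m)}(x) y^m \bigr)(t+j h). \]
Since the identity must hold for every $j$, the terms free of $(-1)^j$ and those carrying $(-1)^j$ decouple into two independent formal equations, one governing $x$ and one governing $y$.

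Next I determine $f_n$ and $g_n$ recursively by matching powers of $h$ in these two equations. Consistency gives $\rho(1) = 0$ and $\rho'(1) = \sigma(1) \neq 0$, hence $\frac{1}{h}\rho(S_h) x = \rho'(1) \dot x + \O(h)$; the leading order of the even equation then yields $\dot x = f(x)$, i.e.\@ $f_0 = f$. Symmetry forces $\rho(-1) = 0$, and stability together with $\rho$ being of degree $2$ makes this root simple, so $\rho'(-1) \neq 0$; the leading order of the odd equation then determines $g_0(x,y)$ as an explicit multiple of $f'(x)\, y$. The hypothesis $y(0) = \O(h)$ makes $y$ formally of order $h$, so each $y^m$ contributes only at orders $h^m$ and higher, and only finitely many $m$ enter any fixed order of $h$. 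At order $h^n$, the new unknown $f_n$ appears on the even side only through $\rho'(1) \dot x$, so the equation takes the form $\rho'(1) f_n = (\text{known})$ and is solvable because $\rho'(1) \neq 0$; the odd equation similarly determines $g_n$ because $\rho'(-1) \neq 0$. Time derivatives of $x$ and $y$ appearing in the known part are eliminated using the already-constructed $f_0, \ldots, f_{n-1}$ and $g_0, \ldots, g_{n-1}$, so $f_n$ and $g_n$ end up as explicit functions of $(x,y)$.

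Once $f_0, \ldots, f_k$ and $g_0, \ldots, g_k$ have been constructed, substituting the truncated system \eqref{trunc-principal}--\eqref{trunc-parasitic} into the parity-split identities leaves a residual of order $h^{k+1}$, which recombines through the ansatz into the claimed residual in the multistep equation; the assumption $y(0) = \O(h)$ is precisely what is needed for the $y$-expansion to remain valid at each order on bounded time intervals. The main obstacle is the bookkeeping for the expansion of the nonlinear right hand side: keeping powers of $y$ and powers of $h$ consistently tracked through the parity decomposition, and verifying at each order that the equation for $f_n$ (resp.\ $g_n$) has an invertible leading coefficient $\rho'(1)$ (resp.\ $\rho'(-1)$). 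Once this framework is set up, the recursive determination of $f_n$ and $g_n$ is routine.
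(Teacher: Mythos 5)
The paper does not prove Proposition \ref{prop-full} at all: it is quoted as a special case of Theorem XV.3.5 of Hairer, Lubich and Wanner, so there is no in-paper argument to compare against. Your proposal is, in essence, a reconstruction of the proof of that cited theorem specialized to a symmetric two-step method, whose characteristic roots are exactly $\pm 1$: the ansatz $q_j = x(t+jh) + (-1)^j y(t+jh)$, the splitting of the substituted identity into the part free of $(-1)^j$ and the part carrying $(-1)^j$, and the recursive determination of $f_n$ and $g_n$ from the invertible leading coefficients $\rho'(1)$ and $\rho'(-1)$ is precisely the standard construction, and your outline is correct. Two small points of precision: the nonvanishing $\rho'(1)=\sigma(1)\neq 0$ comes from stability (simplicity of the root $1$) combined with consistency, not from consistency alone, just as you argue for the root $-1$; and the parity ``decoupling'' is best presented as a sufficient construction (impose both equations, then the sum holds), although with the phrase ``for every choice of $t$'' the necessity direction also holds. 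Finally, the bookkeeping you describe — only finitely many powers $y^m$ entering each order of $h$ — indeed rests on propagating $y=\O(h)$ along solutions, which works because the leading parasitic term $g_0$ is linear in $y$, so a Gronwall argument keeps $y$ of size $\O(h)$ on bounded time intervals; this is worth stating explicitly if you flesh out the sketch, but it is the same mechanism used in the cited reference.
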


We will call the corresponding system of formal differential equations
\begin{align}
\dot{x} &= f_0(x,y) + h f_1(x,y) + h^2 f_2(x,y) + \ldots , \label{modsys-principal}\\
\dot{y} &= g_0(x,y) + h g_1(x,y) + h^2 g_2(x,y) + \ldots , \label{modsys-parasitic}
\end{align}
the \emph{full system of modified equations}. We call Equation \eqref{modsys-parasitic} the \emph{parasitic modified equation}.

If $y=0$, then Equation \eqref{modsys-principal} reduces to the principal modified equation \eqref{principal-modeqn} and Equation \eqref{modsys-parasitic} reads $\dot{y} = 0$. Hence to determine whether parasitic solutions become dominant over time we need to determine the stability of the invariant manifold $\{y=0\}$ of the system \eqref{modsys-principal}\---\eqref{modsys-parasitic}.

In general, even if the difference equation is not of the form \eqref{twostep}, we have the following definition.

\begin{defi}\label{defi}
Let $\Phi(q_{j-1},q_j,q_{j+1},h)$ be a consistent discretization of some function $F(q,\dot{q})$. 

\begin{enumerate}[$(a)$]
\item Equation \eqref{principal-modeqn} is the \emph{principal modified equation} for the difference equation
\begin{equation}\label{eq1-def}
 \Phi(q_{j-1},q_j,q_{j+1},h) = 0
\end{equation}
if for every truncation index $k$, every solution of the truncated equation \eqref{principal-modeqn-tr} satisfies 
\[ \Phi(q(t-h),q(t),q(t+h),h) = \O(h^{k+1}) \]
at all times $t$.

\item The system of equations \eqref{modsys-principal}\---\eqref{modsys-parasitic} is the \emph{full system of modified equations} for the Equation \eqref{eq1-def} if for every truncation index $k$, for every solution $(x,y)$ of the truncated system \eqref{trunc-principal}\---\eqref{trunc-parasitic}, the discrete curve $q_j = x(t+jh) + (-1)^j y(t+jh)$ satisfies
\[  \Phi(q_{j-1},q_j,q_{j+1},h) = \O(h^{k+1}) \]
for all choices of $t$.
\end{enumerate}
\end{defi}

\section{A Lagrangian for the principal modified equation}
\label{sect-principal}

In \cite{vermeeren2015modified} we constructed a modified Lagrangian for variational integrators in the case of non-degenerate Lagrangian systems. A straightforward adaptation of this construction will give us a Lagrangian for the principal modified equation. Here we present the construction and a rough sketch of the proof. The details of the proof are perfectly analogous to the non-degenerate case, so we refer to \cite{vermeeren2015modified} for their discussion.


We identify points $q_j$ of a numerical solution with step size $h$ with evaluations $q(jh)$ of an interpolating curve. Using a Taylor expansion we can write the discrete Lagrangian $L_\disc(q_{j-1},q_j,h)$ as a function of the interpolating curve $q$ and its derivatives, all evaluated at the point $jh - \frac{h}{2}$,
\[ \cL_\disc([q] ,h)
:= L_\disc\!\left( q - \frac{h}{2}\dot{q} + \frac{1}{2} \left(\frac{h}{2}\right)^2 \ddot{q} - \ldots ,\  q + \frac{h}{2}\dot{q} + \frac{1}{2} \left(\frac{h}{2}\right)^2 \ddot{q} + \ldots, h \right), \]
where the square brackets denote dependence on $q$ and any number of its derivatives.

We want to write the discrete action
\[ S_\disc((q_j)_\jj,h) = \sum_{j=1}^n h L_\disc(q_{j-1},q_j,h) = \sum_{j=1}^n h \cL_\disc\!\left( \left[q \left( jh - \tfrac{h}{2}\right)\right],h \right) \] 
as an integral. This can be done using the Euler-Maclaurin formula. We obtain the \emph{meshed modified Lagrangian}
\begin{align*}
\cL_\mesh([q(t)],h)
:\!&= \sum_{i=0}^\infty \left(2^{1-2i}-1\right) \frac{h^{2i} B_{2i}}{(2i)!} \frac{\d^{2i}}{\d t^{2i}} \cL_\disc([q(t)],h) \\
&= \cL_\disc([q(t)],h) - \frac{h^2}{24} \frac{\d^2}{\d t^2} \cL_\disc([q(t)],h) + \frac{7h^4}{5760} \frac{\d^4}{\d t^4} \cL_\disc([q(t)],h) + \ldots,
\end{align*}
where $B_{2i}$ are the Bernoulli numbers. The power series defining $\cL_\mesh$ generally does not converge. Formally, it satisfies 
\[ S_\disc((q(jh))_\jj,h) = \int \cL_\mesh([q(t)],h) \,\d t . \] 
Note that $\cL_\mesh$ depends on higher derivatives of $q$. Below we will construct a modified Lagrangian that only depends on $q$ and $\dot{q}$. 

The word \emph{meshed} refers to the fact that the discrete system provides additional structure for the continuous variational problem. In the \emph{meshed variational problem}, non-differentiable curves are admissible as long as their singular points are consistent with the mesh, i.e.\@ if they occur at times that are an integer multiple of $h$ away from each other. This imposes additional conditions on critical curves, related to the natural boundary conditions and to the Weierstrass-Erdmann Corner conditions (see e.g.\@ \cite[Sec.\@ 6 and 13]{gelfand1963calculus} for these concepts). These conditions are
\begin{equation}\label{nic}
\forall \ell \geq 2: \quad \frac{\partial \cL}{\partial q^{(\ell)}}(t) = 0.
\end{equation}
We will call them the \emph{natural interior conditions}. Because the action integral of $\cL_\mesh$ equals the discrete action, variations supported on a single mesh interval (i.e.\@ in between consecutive points of the discrete curve) do not change the action integral of $\cL_\mesh$. This implies that the natural interior conditions are automatically satisfied on solutions of the Euler-Lagrange equation (for the particular Lagrangian $\cL_\mesh$, but not in general).

Consider the Euler-Lagrange equation of $\cL_\mesh$,
\[ \sum_{j=0}^\infty (-1)^j \frac{\d^j}{\d t^j} \der{\cL_\mesh}{q^{(j)}} = 0. \]
Because the natural interior conditions \eqref{nic} are automatically satisfied on critical curves, it is equivalent to
\[ \der{\cL_\mesh}{q} - \frac{\d}{\d t} \der{\cL_\mesh}{\dot{q}} = 0 . \]
This equation is of the form
\[ \cE_0(q,\dot{q}) + h \cE_1(q,\dot{q},\ddot{q}) + h^2 \cE_2 \big( q,\dot{q},\ddot{q},q^{(3)} \big) + \ldots = 0 . \]
In the leading order we find a first order differential equation, which we can use to eliminate higher derivatives in the next order (assuming that the derivatives of $q$ are bounded as $h \rightarrow 0$). This can be applied recursively up to any order. Hence we can write the Euler-Lagrange equation formally as a first order differential equation, say
\begin{equation}\label{ih1}
\dot{q} = F(q,h).
\end{equation}
Then expressions for all higher derivatives follow by differentiation and substitution,
\begin{equation}\label{ih2}
\ddot{q} = F_2(q,h) , \qquad q^{(3)} = F_3(q,h), \qquad \ldots .
\end{equation}
The assumption that the derivatives of $q$ are bounded as $h \rightarrow 0$ is not restrictive in practice. The same assumption is necessary to state many other results regarding modified equations rigorously. Families of curves $(q_h)_\hh$ that satisfy this condition are called \emph{admissible families} in \cite{vermeeren2015modified}. In particular there holds for admissible families that if the functions are small, $q_h = \O(h^k)$, then so are their derivatives, $q_h^{(\ell)} = \O(h^k)$. We will use this implicitly later on.

Using \eqref{ih1} and \eqref{ih2} we can replace second and higher derivatives in the meshed Lagrangian to find a first order \emph{modified Lagrangian}, 
\[ \cL_{\mod}(q,\dot{q},h) =  \cL_\mesh([q],h) \,\Big|_{q^{(j)} = F_j(q,h),\ \forall j \geq 2} . \]
Or, avoiding formal power series, a truncated modified Lagrangian
\[ \cL_{\mod,k}(q,\dot{q},h) = \cT_{k}\!\left( \cL_\mesh([q],h) \,\Big|_{q^{(j)} = F_j(q,h),\ \forall j \geq 2} \right), \]
where $\cT_{k}$ denotes truncation of the power series after order $k$. In general the replacements $q^{(j)} = F_j(q,h)$ would change the Euler-Lagrange equations, but because of the natural interior conditions \eqref{nic} this is not the case here. Indeed, one finds
\[
\der{\cL_{\mod,k}}{q} = \cT_{k}\!\left( \der{\cL_\mesh}{q} + \sum_{\ell = 2}^\infty \der{\cL_\mesh}{q^{(\ell)}}\der{F_\ell(q,h)}{q} \right) = \cT_{k}\!\left( \der{\cL_\mesh}{q} \right)
\] 
and
\[
\der{\cL_{\mod,k}}{\dot q} = 
\cT_{k}\!\left( \der{\cL_\mesh}{\dot q}\right) .
\]
It follows that
\[ \der{\cL_{\mod,k}}{q} - \frac{\d}{\d t} \der{\cL_{\mod,k}}{\dot q} = 
\cT_{k}\!\left( \sum_{j=0}^\infty (-1)^j \frac{\d^j}{\d t^j} \der{\cL_\mesh}{q^{(j)}} \right), \]
so up to a truncation error, both Lagrangians yield the same Euler-Lagrange equations. Note that the natural interior conditions \emph{do not} imply that $\partial \cL_\mesh / \partial\dot{q} = 0$, so replacing first derivatives using $\dot{q} = F(q,h)$ is not allowed!
%

The details presented in \cite{vermeeren2015modified} carry over to the degenerate case and yield the following result.
\begin{theorem}
Consider a discrete Lagrangian that is a consistent discretization of a Lagrangian of the form \eqref{lagrangian}. Let $\cL$ be either $\cL_\mesh$ or $\cL_{\mod,k}$, derived from this discrete Lagrangian. Solve the equation 
\[ \der{\cL}{q} - \frac{\d}{\d t} \der{\cL}{\dot{q}} = 0 \]
for $\dot{q}$, and truncate the resulting power series after order $k$. The result,
\[ \dot{q} = f(q) + h f_1(q) + h^2 f_2(q) + \ldots + h^k f_k(q), \]
is a truncation of the principal modified equation.
\end{theorem}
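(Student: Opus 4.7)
The plan is to apply Definition~\ref{defi}$(a)$: if we verify that every solution of the resulting first-order ODE satisfies the discrete Euler--Lagrange equation up to $\O(h^{k+1})$, then uniqueness of the formal series with this property at each order identifies it with the truncation of the principal modified equation. The key technical ingredient is therefore a bridge between the continuous EL operator of $\cL_\mesh$ and the discrete EL operator. Using the formal identity $\int\cL_\mesh([q(t)],h)\,\d t = S_\disc((q(jh))_\jj,h)$ and varying with respect to a smooth bump localized near $t=jh$, one obtains
\[
\left[\sum_{j=0}^{\infty}(-1)^j \frac{\d^j}{\d t^j}\der{\cL_\mesh}{q^{(j)}}\right]_{t=jh} = h\bigl[\D_2 L_\disc(q(jh{-}h),q(jh),h) + \D_1 L_\disc(q(jh),q(jh{+}h),h)\bigr]
\]
as an equality of formal power series in $h$. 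On critical curves of $\cL_\mesh$ the natural interior conditions \eqref{nic} collapse the left-hand side to $\partial\cL_\mesh/\partial q - \d/\d t\,(\partial\cL_\mesh/\partial\dot q)$, so a smooth solution of this reduced EL equation gives, on the mesh, an exact solution (formally in $h$) of the discrete EL equation. For $\cL = \cL_{\mod,k}$, the identity $\partial\cL_{\mod,k}/\partial q = \cT_k(\partial\cL_\mesh/\partial q)$ shown just before the theorem transfers this conclusion modulo a residual of order $\O(h^{k+1})$.

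With the bridge in place, I would carry out the iterative reduction already described in the text: expand the reduced EL equation as $\cE_0(q,\dot q) + h\cE_1(q,\dot q,\ddot q) + \ldots = 0$, solve $\cE_0 = 0$ at leading order for $\dot q = F(q,h)$, differentiate to express $\ddot q = F_2(q,h)$, $q^{(3)} = F_3(q,h)$, \ldots{} in terms of $q$ and $h$, and substitute to eliminate higher derivatives order by order. Because admissible families have derivatives bounded as $h \to 0$, truncating this procedure at order $k$ produces a first-order ODE whose solutions satisfy the reduced EL equation, and hence by the bridge the discrete EL equation, up to $\O(h^{k+1})$; Definition~\ref{defi}$(a)$ then yields the theorem.

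The principal obstacle is justifying the bridge identity rigorously. Matching continuous variations of a smooth interpolant against discrete variations of the sampled sequence requires tracking Taylor remainders in the Euler--Maclaurin expansion carefully, and the natural interior conditions must emerge precisely from the fact that variations supported strictly inside a mesh interval do not affect $S_\disc$. Since $\cL_\mesh$ is only a formal power series, the identity is really a statement about matching coefficients of $h$ at every order; the author signals that this bookkeeping carries over verbatim from the non-degenerate case in \cite{vermeeren2015modified}, which is where I would consult for the detailed verification.
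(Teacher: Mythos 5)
Your proposal takes essentially the same route as the paper: the paper's own argument for this theorem is exactly the construction laid out in Section \ref{sect-principal} (meshed Lagrangian via Euler--Maclaurin, natural interior conditions \eqref{nic} from variations supported inside a mesh interval, iterative elimination of higher derivatives, and the truncation identities for $\cL_{\mod,k}$), with the detailed bookkeeping deferred to \cite{vermeeren2015modified}, just as you do. The only slip is the factor $h$ in your bridge identity: since $S_\disc = \sum_j h\, L_\disc$ while the variational derivative of $\int \cL_\mesh \,\d t$ is a density, the correct formal statement is $\sum_{\ell}(-1)^\ell \tfrac{\d^\ell}{\d t^\ell}\tfrac{\partial \cL_\mesh}{\partial q^{(\ell)}} = \D_2 L_\disc(q(t-h),q(t),h) + \D_1 L_\disc(q(t),q(t+h),h)$ with no prefactor (both sides are $\O(1)$, e.g.\@ $\dot{q}^T A_\sk - H'$ at leading order, as one checks on simple examples); since the expression is equated to zero this does not affect your argument.
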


\subsubsection*{Midpoint rule}
From the discrete Lagrangian \eqref{dlag-mp} we find
\begin{align*}
\cL_\disc([q],h) &= L_\disc \!\left( q - \frac{h}{2}\dot{q} + \frac{h^2}{8}\ddot{q} - \ldots
\, , \, q + \frac{h}{2}\dot{q} + \frac{h^2}{8}\ddot{q} + \ldots, h \right) \\
&= \inner{ \alpha \!\left( q + \frac{h^2}{8}\ddot{q} + \ldots \right)\! }{ \dot{q} + \frac{h^2}{24} q^{(3)} + \ldots } - H \!\left( q + \frac{h^2}{8}\ddot{q} + \ldots \right) \\
&= \inner{ \alpha(q) }{ \dot{q} } - H(q)  + \frac{h^2}{24} \left( \binner{ \alpha(q) }{ q^{(3)} } + 3 \inner{ \alpha'(q)\ddot{q} }{ \dot{q} } - 3 H'(q)\ddot{q} \right) + \O(h^4).
\end{align*}
It follows that
\begin{align*}
\cL_\mesh([q],h) &= \inner{ \alpha }{ \dot{q} } - H \\
&\qquad + \frac{h^2}{24} \left( 2 \inner{ A_\sk \dot{q} }{ \ddot{q} } - \inner{ \alpha''(\dot{q},\dot{q}) }{ \dot{q} } - 2 H'\ddot{q} + H''(\dot{q},\dot{q}) \right)  + \O(h^4),
\end{align*}
where the argument $q$ of $A_\sk$, $\alpha$, $H$, and their derivatives is omitted. From this expression we obtain $\cL_{\mod,3}$ by replacing all second derivatives of $q$ using the derivative of the leading order equation, 
\[\ddot{q} = \frac{\d}{\d t}\left( A_\sk(q)^{-1} H'(q)^T \right) + \O(h^2). \]
In case that $\alpha$ is linear we have
\begin{equation}
\ddot{q} = A_\sk^{-1} H'' \dot{q} + \O(h^2) \label{ddq1}
\end{equation}
and we find the following expression for the modified Lagrangian (truncated after $h^3$):
\[ \cL_{\mod,3} = \dot{q}^T A q - H + \frac{h^2}{24} \left( -\dot{q}^T H''\dot{q}  - 2 H' A_\sk^{-1}H''\dot{q} \right).
\]

\subsubsection*{Trapezoidal rule}
From the discrete Lagrangian \eqref{dlag-t} we find
\begin{align*}
\cL_\disc([q],h) &= \inner{ \frac{1}{2}\alpha \!\left( q - \frac{h}{2}\dot{q} + \frac{h^2}{8}\ddot{q} \right)\! + \frac{1}{2}\alpha \!\left( q + \frac{h}{2}\dot{q} + \frac{h^2}{8}\ddot{q} \right)\! }{ \dot{q} + \frac{h^2}{24} q^{(3)} } \\
&\quad - \frac{1}{2} H \!\left( q - \frac{h}{2}\dot{q} + \frac{h^2}{8}\ddot{q} \right) - \frac{1}{2} H \!\left( q + \frac{h}{2}\dot{q} + \frac{h^2}{8}\ddot{q} \right) + \O(h^4) \\
&= \inner{ \alpha }{ \dot{q} } - H \\
&\quad + \frac{h^2}{8} \left( \frac{1}{3} \binner{ \alpha }{ q^{(3)} } +  \inner{ \alpha'\ddot{q} }{ \dot{q} } + \inner{ \alpha''(\dot{q},\dot{q}) }{ \dot{q} } - H'\ddot{q} - H''(\dot{q},\dot{q}) \right) + \O(h^4)
\end{align*}
and
\begin{align*}
\cL_\mesh([q],h) &= \inner{ \alpha }{ \dot{q} } - H + \frac{h^2}{12} \left( \inner{ A_\sk \dot{q} }{ \ddot{q} } + \inner{ \alpha''(\dot{q},\dot{q}) }{ \dot{q} } - H'\ddot{q} - H''(\dot{q},\dot{q}) \right) + \O(h^4).
\end{align*}
Again we assume that $\alpha$ is linear. Using Equation \eqref{ddq1} we find the modified Lagrangian
\[
 \cL_{\mod,3} = \dot{q}^T A q - H + \frac{h^2}{12} \left( - 2 \dot{q}^T H''\dot{q}  - H' A_\sk^{-1}H''\dot{q} \right).
\]

\section{The full system of modified equations}\label{sect-full}

For linear symmetric two-step methods, Proposition \ref{prop-full} describes the full system of modified equations. Here we will show that for variational integrators, without assuming linearity, the full system of modified equations is of the same form. In order to construct the system of modified equations, we split the variable $q_j$ of the discrete system into two parts,
\[ q_j = x_j +  (-1)^j y_j. \]
The motivation for this is that we want to use one variable, $x_j$, to encode the principal behavior and the other, $y_j$, for the parasitic behavior. This is inspired by the formula $q_j = x(t+jh) + (-1)^j y(t+jh)$ from Proposition \ref{prop-full} and Definition \ref{defi}.

\subsection{The Lagrangian approach}

A key property of the doubling of variables is that the extended system is still variational.

\begin{prop}\label{prop-extended}
The discrete curve $(x_j,y_j)_\jj$ is critical for 
\[
\widehat{L}(x_j,y_j,x_{j+1},y_{j+1},h) = \frac{1}{2} L(x_j + y_j, x_{j+1} - y_{j+1},h) + \frac{1}{2} L(x_j - y_j, x_{j+1} + y_{j+1},h),
\]
if and only if the discrete curves $(q_j^+)_\jj$ and $(q_j^-)_\jj$, defined by $q_j^\pm = x_j \pm (-1)^j y_j$, are critical for $L(q_j,q_{j+1},h)$.
\end{prop}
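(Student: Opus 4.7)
The plan is to show that, after accounting for the parity of $j$, the extended discrete Lagrangian $\widehat L$ is nothing but the average of $L$ evaluated along the two shifted curves $q^\pm$, and then to read off the claim by a simple separation-of-variations argument. Concretely, my first step is to verify the pointwise identity
\[
\widehat{L}(x_j, y_j, x_{j+1}, y_{j+1}, h) \;=\; \tfrac{1}{2} L(q_j^+, q_{j+1}^+, h) + \tfrac{1}{2} L(q_j^-, q_{j+1}^-, h)
\]
for every $\jj$. This is a case check on parity: for $j$ even one has $(q_j^+, q_{j+1}^+) = (x_j + y_j,\, x_{j+1} - y_{j+1})$ and $(q_j^-, q_{j+1}^-) = (x_j - y_j,\, x_{j+1} + y_{j+1})$, which matches the two summands defining $\widehat L$; for $j$ odd the two pairs swap, but the average is symmetric so the same identity holds. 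Summing over $j$ gives $\widehat S((x_j,y_j)) = \tfrac{1}{2} S((q_j^+)) + \tfrac{1}{2} S((q_j^-))$.

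Next I would take variations with $\delta x_j = \xi_j$ and $\delta y_j = \eta_j$, which by definition of $q_j^\pm$ produce $\delta q_j^\pm = \xi_j \pm (-1)^j \eta_j$. The standard discrete integration by parts applied to $S((q_j^+))$ and $S((q_j^-))$ separately yields
\[
\delta \widehat S \;=\; \tfrac{1}{2} \sum_j E_j^+ \, \delta q_j^+ + \tfrac{1}{2} \sum_j E_j^- \, \delta q_j^-,
\]
where $E_j^\pm := \D_1 L(q_j^\pm, q_{j+1}^\pm, h) + \D_2 L(q_{j-1}^\pm, q_j^\pm, h)$ is the discrete Euler--Lagrange expression for $L$ along $q^\pm$. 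Substituting the expressions for $\delta q_j^\pm$ and regrouping,
\[
\delta \widehat S \;=\; \sum_j \tfrac{1}{2} (E_j^+ + E_j^-) \xi_j \;+\; \sum_j \tfrac{(-1)^j}{2} (E_j^+ - E_j^-) \eta_j.
\]

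Finally, since $\xi_j$ and $\eta_j$ are independent compactly supported variations, $\delta \widehat S = 0$ for all choices is equivalent to $E_j^+ + E_j^- = 0$ together with $E_j^+ - E_j^- = 0$ for every $\jj$, which is in turn equivalent to $E_j^+ = E_j^- = 0$ for every $\jj$. That is precisely the statement that both $(q_j^+)_\jj$ and $(q_j^-)_\jj$ satisfy the discrete Euler--Lagrange equation for $L$. The only delicate step is the parity bookkeeping in the first identity; once that is pinned down the remainder is a routine linear decoupling of the two Euler--Lagrange expressions.
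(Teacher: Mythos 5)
Your proof is correct, and the parity bookkeeping in your first step is accurate: for $j$ even the pairs $(q_j^\pm,q_{j+1}^\pm)$ are exactly the two arguments appearing in $\widehat L$, and for $j$ odd they swap, which leaves the average unchanged, so indeed $\widehat L(x_j,y_j,x_{j+1},y_{j+1},h)=\tfrac12 L(q_j^+,q_{j+1}^+,h)+\tfrac12 L(q_j^-,q_{j+1}^-,h)$ term by term. Your route differs from the paper's in its organization rather than its substance. The paper never states this Lagrangian-level identity; instead it writes out the two discrete Euler--Lagrange equations of $\widehat L$ (derivatives with respect to $x_j$ and $y_j$), takes their sum and difference, and observes that, depending on the parity of $j$, one of the resulting equations is the discrete Euler--Lagrange equation along $q^+$ and the other the one along $q^-$. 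Your version does the same linear decoupling one level up: the identity $\widehat S=\tfrac12 S\big((q_j^+)\big)+\tfrac12 S\big((q_j^-)\big)$ plus the variation $\delta q_j^\pm=\xi_j\pm(-1)^j\eta_j$, with the regrouping of the $\xi_j$ and $\eta_j$ coefficients playing exactly the role of the paper's sum and difference of equations. What your packaging buys is conceptual transparency: the extended action is literally the average of the original action over the two interleaved curves, which makes the ``if and only if'' immediate from the independence of the variations (and from invertibility of the map $(x_j,y_j)\mapsto(q_j^+,q_j^-)$); the paper's computation is more direct and produces the explicit form of the extended Euler--Lagrange equations, which is what it uses in the subsequent examples. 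Your appeal to discrete integration by parts with compactly supported variations is standard and gap-free.
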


\begin{proof}
The discrete Euler-Lagrange equations for $\widehat{L}(x_j,y_j,x_{j+1},y_{j+1},h)$ are
\begin{align*}
& \frac{1}{2}\D_2 L(x_{j-1}+y_{j-1},x_j-y_j,h) + \frac{1}{2}\D_2 L(x_{j-1}-y_{j-1},x_j+y_j,h) \\
+ & \frac{1}{2}\D_1 L(x_j+y_j,x_{j+1}-y_{j+1},h) + \frac{1}{2}\D_1 L(x_j-y_j,x_{j+1}+y_{j+1},h) = 0
\end{align*}
and
\begin{align*}
- & \frac{1}{2}\D_2 L(x_{j-1}+y_{j-1},x_j-y_j,h) + \frac{1}{2}\D_2 L(x_{j-1}-y_{j-1},x_j+y_j,h) \\
+ & \frac{1}{2}\D_1 L(x_j+y_j,x_{j+1}-y_{j+1},h) - \frac{1}{2}\D_1 L(x_j-y_j,x_{j+1}+y_{j+1},h) = 0 .
\end{align*}
Taking the sum resp.\@ the difference of these equations we find
\begin{align*}
& \D_2 L(x_{j-1}-y_{j-1},x_j+y_j,h) + \D_1 L(x_j+y_j,x_{j+1}-y_{j+1},h) = 0, \\
& \D_2 L(x_{j-1}+y_{j-1},x_j-y_j,h) + \D_1 L(x_j-y_j,x_{j+1}+y_{j+1},h) = 0.
\end{align*}
Depending on the parity of $j$, either the first or the second of those equations is
\[ \D_2 L(q_{j-1}^+,q_j^+,h) + \D_1 L(q_j^+,q_{j+1}^+,h) = 0. \]
The other one is
\[ \D_2 L(q_{j-1}^-,q_j^-,h) + \D_1 L(q_j^-,q_{j+1}^-,h) = 0. \]
Hence $(x_j,y_j)_\jj$ satisfies the Euler-Lagrange equations for $\widehat{L}(x_j,y_j,x_{j+1},y_{j+1},h)$ if and only if $(q_j^+)_\jj$ and $(q_j^-)_\jj$ satisfy the Euler-Lagrange equation for $L(q_j,q_{j+1},h)$.
\end{proof}

\begin{theorem}\label{thm}
Let
\begin{equation}\label{fullmod}
\begin{split}
\dot{x} &= f_0(x,y) + h f_1(x,y) + \ldots + h^k f_k(x,y) \\
\dot{y} &= g_0(x,y) + h g_1(x,y) + \ldots + h^k g_k(x,y), 
\end{split}
\end{equation}
be the $k$-th truncation of the principal modified equation for the difference equation described by the discrete Lagrangian $\widehat{L}$ from Proposition \ref{prop-extended}. Then \eqref{fullmod} is the $k$-th truncation of the full system of modified equations for the variational integrator described by $L$.
\end{theorem}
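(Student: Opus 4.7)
The plan is to apply Definition~\ref{defi}(a) to the two-step method derived from $\widehat{L}$ and then reuse the algebraic identity from the proof of Proposition~\ref{prop-extended} to transfer the $\O(h^{k+1})$ residual into the statement required by Definition~\ref{defi}(b) for $L$. By hypothesis \eqref{fullmod} is the $k$-th truncation of the principal modified equation for the difference equation coming from $\widehat{L}$, so by the defining property (Definition~\ref{defi}(a)) every solution $(x(t),y(t))$ of \eqref{fullmod} satisfies $\Phi_{\widehat{L}}\bigl((x,y)(jh-h),(x,y)(jh),(x,y)(jh+h),h\bigr)=\O(h^{k+1})$, where $\Phi_{\widehat{L}}$ denotes the pair of discrete Euler-Lagrange expressions obtained by varying $\widehat{L}$ with respect to $x_j$ and $y_j$ separately.

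Next I would invoke exactly the two linear combinations that appear in the proof of Proposition~\ref{prop-extended}: the sum and the difference of the two components of $\Phi_{\widehat{L}}$ evaluated at $(x(jh),y(jh))$ equal, according to the parity of $j$, the discrete Euler-Lagrange expression for $L$ along $q^+_j:=x(jh)+(-1)^j y(jh)$ and along $q^-_j:=x(jh)-(-1)^j y(jh)$. Since both components are $\O(h^{k+1})$, so are their sum and difference, hence $\Phi_L(q^\pm_{j-1},q^\pm_j,q^\pm_{j+1},h)=\O(h^{k+1})$; crucially, this passage loses no order of $h$ because it is a fixed linear combination with $h$-independent coefficients.

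To match Definition~\ref{defi}(b) we need this bound for the curve $q_j=x(t+jh)+(-1)^j y(t+jh)$ for an arbitrary $t$. Since \eqref{fullmod} is autonomous, the time-translated pair $(x(\cdot+t),y(\cdot+t))$ is again a solution of the truncated system, and the previous step applied to this shifted solution realises the desired $q_j$ as its $q^+$. Therefore $\Phi_L(q_{j-1},q_j,q_{j+1},h)=\O(h^{k+1})$ for every $t$, which is exactly Definition~\ref{defi}(b); this identifies \eqref{fullmod} as the $k$-th truncation of the full system of modified equations for $L$.

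The point that requires most care is the implicit assertion in the theorem statement that the principal modified equation of $\widehat{L}$ actually exists in the first-order autonomous form~\eqref{fullmod}, so that Definition~\ref{defi}(a) applies to it at all. This relies on extending the reduction procedure of Section~\ref{sect-principal} to the doubled discrete Lagrangian $\widehat{L}$: one constructs $\widehat{\cL}_\mesh$, then $\widehat{\cL}_{\mod,k}$, and eliminates higher derivatives of $(x,y)$ using the leading-order equation. Since $\widehat{L}$ inherits the linear-in-velocities structure of $L$ at leading order, this elimination goes through verbatim, and the first-order form~\eqref{fullmod} is well-defined. Once this structural point is in place, the remainder of the argument is the straightforward propagation of residuals sketched above.
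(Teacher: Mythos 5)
Your proposal is correct and follows essentially the same route as the paper: use the defining property of the principal modified equation for $\widehat{L}$, transfer the $\O(h^{k+1})$ residual to the discrete Euler--Lagrange equation for $L$ via the sum/difference algebra of Proposition~\ref{prop-extended}, and conclude with Definition~\ref{defi}$(b)$. Your explicit remark that the transfer of \emph{approximate} residuals uses the $h$-independent linear combinations from the proof of Proposition~\ref{prop-extended} (rather than the proposition's exact statement) is a point the paper glosses over, but it is a refinement of the same argument, not a different one.
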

\begin{proof}
Let $(x(t),y(t))$ be a solution of the system \eqref{fullmod}. By definition of the principal modified equation, the discrete curve
\[ \big( x(t+jh),y(t+jh) \big)_\jj \]
satisfies the discrete Euler-Lagrange equations for $\widehat{L}$ up to a truncation error for any choice of $t$. Hence, by Proposition \ref{prop-extended}, the discrete curve
\[ \big( x(t+jh)+(-1)^j y(t+jh) \big)_\jj \]
satisfies the discrete Euler-Lagrange equations for $L$ up to a truncation error. This is the defining property of the system of modified equations, see Definition \ref{defi}$(b)$. 
\end{proof}

\begin{cor}
Up to a truncation error of arbitrarily high order, the full system of modified equations \eqref{fullmod} for a variational integrator is Lagrangian.
\end{cor}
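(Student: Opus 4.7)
The plan is to apply the modified-Lagrangian construction of Section \ref{sect-principal} to the extended discrete Lagrangian $\widehat{L}$ from Proposition \ref{prop-extended} and then invoke Theorem \ref{thm}. In detail: I would treat $\widehat{L}(x_j, y_j, x_{j+1}, y_{j+1}, h)$ as a discrete Lagrangian on the $2N$-dimensional configuration space with coordinates $(x,y)$, and apply the theorem of Section \ref{sect-principal} to obtain, for each truncation index $k$, a modified Lagrangian $\cL_{\mod,k}^{\widehat{L}}$ whose Euler-Lagrange equation, solved for the velocities and truncated at order $h^k$, is the $k$-th truncation of the principal modified equation for $\widehat{L}$. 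By Theorem \ref{thm}, this truncated principal modified equation is exactly the $k$-th truncation of the full system of modified equations \eqref{fullmod} for the original variational integrator. Since $k$ is arbitrary, \eqref{fullmod} is Lagrangian up to a truncation error of arbitrarily high order.

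The main obstacle is verifying the hypothesis of the Section \ref{sect-principal} theorem: one must exhibit a continuous Lagrangian $\widehat{\cL}$ of the form \eqref{lagrangian} on $(x,y)$-space such that $\widehat{L}$ is a consistent discretization of $\widehat{\cL}$. I would do this by Taylor-expanding
\[
\widehat{L}(x, y, X, Y, h) = \tfrac{1}{2} L(x+y, X-Y, h) + \tfrac{1}{2} L(x-y, X+Y, h)
\]
in powers of $h$ using $X = x + h\dot{x} + \O(h^2)$ and $Y = y + h\dot{y} + \O(h^2)$. The $(y,Y) \leftrightarrow (-y,-Y)$ symmetry of $\widehat{L}$ forces the expansion to be even in $(y, \dot{y})$, and a short computation yields a leading-order continuous Lagrangian of the form $\widehat{\cL}(x, y, \dot{x}, \dot{y}) = \inner{\alpha(x)}{\dot{x}} + y^T A(x) \dot{y} - H(x)$ plus higher-order corrections. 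This $\widehat{\cL}$ is linear in $(\dot{x},\dot{y})$, and a block calculation using invertibility of $A_\sk$ shows that the associated skew-symmetric matrix $\widehat{A}_\sk$ is invertible on a neighborhood of $\{y = 0\}$, which is the regime relevant to parasitic solutions.

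A secondary subtlety is that for certain discretizations (e.g.\@ the trapezoidal rule) the naive Taylor expansion of $\widehat{L}$ also produces terms scaling like $y^2$ without an accompanying factor of $h$, which at first sight violate the usual notion of consistency. I would address this by treating $y$ as a formal perturbation parameter of the same order as $h$, consistent with the $y(0) = \O(h)$ regime of Proposition \ref{prop-full}: under this convention the anomalous terms are absorbed into higher-order corrections of $\widehat{\cL}$, and the formal-series machinery of Section \ref{sect-principal} (Euler-Maclaurin, meshed Lagrangian, and inversion for the velocities) goes through without modification, so the conclusion of the corollary follows.
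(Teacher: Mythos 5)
Your main line of argument is exactly the paper's: the corollary is an immediate consequence of Theorem \ref{thm} together with the construction of Section \ref{sect-principal} applied to the extended discrete Lagrangian $\widehat{L}$ of Proposition \ref{prop-extended}, and the paper gives no further proof beyond this. So the architecture of your proposal is correct and matches the intended one.

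However, your handling of the consistency hypothesis contains a misstep. The $h \rightarrow 0$ limit of $\widehat{L}$ is \emph{not} in general of the form $\inner{\alpha(x)}{\dot{x}} + y^T A(x)\dot{y} - H(x)$ plus corrections; that is what one gets for the midpoint rule. For the trapezoidal rule the leading term retains full $y$-dependence, e.g.\@ the potential term is $\tfrac{1}{2}H(x+y)+\tfrac{1}{2}H(x-y)$, and the velocity-linear terms involve $\alpha(x\pm y)$ and $\alpha'(x\pm y)$. The point is that this is not an obstacle at all: the limit is still a Lagrangian of the form \eqref{lagrangian} on the doubled configuration space, with $\widehat{\alpha}(x,y)$ and $\widehat{H}(x,y)$ depending on both variables, and with $\widehat{A}_\sk(x,0) = \bigl(\begin{smallmatrix} A_\sk(x) & 0 \\ 0 & -A_\sk(x) \end{smallmatrix}\bigr)$ invertible near $\{y=0\}$. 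So $\widehat{L}$ is a consistent discretization in the ordinary sense and Section \ref{sect-principal} applies directly; the ``anomalous'' $y^2$-terms without a factor of $h$ are simply part of $\widehat{H}$. Your proposed cure---regrading $y$ as a formal parameter of order $h$---is not only unnecessary but would change the object being computed: Definition \ref{defi}$(b)$ and Theorem \ref{thm} truncate in powers of $h$ with coefficients that are functions of $(x,y)$, with no smallness assumption on $y$, and the $h$-free $y$-dependent terms are precisely what produce the leading parasitic equation $\dot{y} = -A_\sk^{-1}H''(x)\,y$ of \eqref{parasites-lin} for the trapezoidal rule. Absorbing them into higher-order corrections would push the leading parasitic dynamics out of the leading-order Lagrangian and yield truncations that do not agree with the full system of modified equations as defined. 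Drop the regrading, verify consistency with the genuinely $y$-dependent limit Lagrangian, and your proof is complete and identical in substance to the paper's.
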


Let us illustrate this construction by applying it to our two methods.

\subsubsection*{Midpoint rule}

We have
\begin{align*}
\widehat{L}_\disc(x_j,y_j,x_{j+1},y_{j+1},h) &= \frac{1}{2} \inner{ \alpha \!\left( \frac{x_j+y_j+x_{j+1}-y_{j+1}}{2} \!\right) }{ \frac{x_{j+1} - y_{j+1} - x_j - y_j}{h} } \\
&\quad + \frac{1}{2} \inner{  \alpha \!\left( \frac{x_j-y_j+x_{j+1}+y_{j+1}}{2} \right)\! }{ \frac{x_{j+1} + y_{j+1} - x_j + y_j}{h} } \\
& - \frac{1}{2} H \!\left( \frac{x_j+y_j+x_{j+1}-y_{j+1}}{2} \right) - \frac{1}{2} H \!\left( \frac{x_j-y_j+x_{j+1}+y_{j+1}}{2} \right).
\end{align*}
Hence
\begin{align*}
\widehat{\cL}_\disc([x,y],h) &= \frac{1}{2} \inner{ \alpha \!\left( x - \frac{h}{2}\dot{y} \right)\! }{ \dot{x} - \frac{2}{h} y } + \frac{1}{2} \inner{ \alpha \!\left( x + \frac{h}{2}\dot{y} \right)\! }{ \dot{x} + \frac{2}{h} y } -  H(x) + \O(h) \\
&= \inner{ \alpha(x) }{ \dot{x}} + \inner{ \alpha'(x) \dot{y} }{ y } -  H(x) + \O(h) .
\end{align*}
This is also the leading order term of the modified Lagrangian, $\widehat{\cL}_{\mod,0}(x,y,\dot{x},\dot{y},h)$. If $\alpha$ is linear, its Euler-Lagrange equations are
\begin{align*}
\dot{x} &= A_\sk^{-1} H'(x)^T + \O(h), \\
\dot{y} &= 0 + \O(h).
\end{align*}
Since $y$ is constant in leading order, we need to look at higher order terms to determine whether parasitic solutions occur. No higher order terms of the modified Lagrangian contain $y$ itself, and those terms that contain derivatives of $y$ are at least quadratic in the derivatives of $y$. From these observations one can deduce that the parasitic modified equation is $\dot{y} = 0$ to any order of accuracy. It follows that the parasitic oscillations are of constant magnitude. Hence if the initialization of the discrete system is close to a solution of the principal modified equation, then the discrete solution will remain close to it.

\subsubsection*{Trapezoidal rule}

We have
\begin{align*}
\widehat{L}_\disc(x_j,y_j,x_{j+1},y_{j+1},h) &= \frac{1}{4} \inner{ \alpha(x_j+y_j) + \alpha(x_{j+1}-y_{j+1}) }{ \frac{x_{j+1} - y_{j+1} - x_j - y_j}{h} } \\
&\quad + \frac{1}{4} \inner{  \alpha(x_j-y_j) + \alpha(x_{j+1}+y_{j+1}) }{ \frac{x_{j+1} + y_{j+1} - x_j + y_j}{h} } \\
&\quad - \frac{1}{4} H(x_j+y_j) - \frac{1}{4} H(x_{j+1}-y_{j+1}) \\
&\quad - \frac{1}{4} H(x_j-y_j) - \frac{1}{4} H(x_{j+1}+y_{j+1}).
\end{align*}
Hence
\begin{align*}
\widehat{\cL}_\disc & ([x,y],h) 
= \frac{1}{4} \inner{ \alpha \!\left(x+y-\frac{h}{2}\dot{x}-\frac{h}{2}\dot{y} \right)\! + \alpha \!\left(x-y+\frac{h}{2}\dot{x}-\frac{h}{2}\dot{y} \right)\! }{ \dot{x} - \frac{2}{h} y } \\
&\qquad + \frac{1}{4} \inner{ \alpha \!\left(x-y-\frac{h}{2}\dot{x}+\frac{h}{2}\dot{y} \right)\! + \alpha \!\left(x+y+\frac{h}{2}\dot{x}+\frac{h}{2}\dot{y} \right)\! }{ \dot{x} + \frac{2}{h} y } \\
&\qquad - \frac{1}{2} H(x+y) - \frac{1}{2} H(x-y) + \O(h) 
\\
&= \frac{1}{4} \inner{ \alpha(x \!+\! y)-\frac{h}{2}\alpha'( x \!+\! y)(\dot{x} + \dot{y}) + \alpha(x \!-\! y)+\frac{h}{2}\alpha'(x \!-\! y)(\dot{x} - \dot{y}) }{ \dot{x} - \frac{2}{h} y } \\
&\qquad + \frac{1}{4} \inner{ \alpha(x \!-\! y)-\frac{h}{2}\alpha'(x \!-\! y)(\dot{x}-\dot{y}) + \alpha(x \!+\! y)+\frac{h}{2}\alpha'(x \!+\! y)(\dot{x}+\dot{y}) }{ \dot{x} + \frac{2}{h} y } \\
&\qquad - \frac{1}{2} H(x+y) - \frac{1}{2} H(x-y) + \O(h) 
\\
&= \frac{1}{2} \inner{\alpha(x+y)}{\dot{x}} + \frac{1}{2} \inner{\alpha(x-y)}{\dot{x}} + \frac{1}{2} \inner{\alpha'(x+y)(\dot{x}+\dot{y})}{y} \\
&\qquad - \frac{1}{2} \inner{\alpha'(x-y)(\dot{x}-\dot{y})}{y} - \frac{1}{2} H(x+y) - \frac{1}{2} H(x-y) + \O(h).
\end{align*}
This is also the leading order term of the modified Lagrangian, $\widehat{\cL}_{\mod,0}(x,y,\dot{x},\dot{y},h)$. If $\alpha$ is linear, $\alpha(q)=Aq$, then we find
\[ \widehat{\cL}_{\mod,0}(x,y,\dot{x},\dot{y},h) = \inner{Ax}{\dot{x}} + \inner{A \dot{y}}{y} - \frac{1}{2} H(x+y) - \frac{1}{2} H(x-y). \]
Its Euler-Lagrange equations are
\begin{align*}
\dot{x} &= A_\sk^{-1} \left( \frac{1}{2} H'(x+y)^T + \frac{1}{2} H'(x-y)^T \right) + \O(h), \\
\dot{y} &= A_\sk^{-1} \left( -\frac{1}{2} H'(x+y)^T + \frac{1}{2} H'(x-y)^T \right) + \O(h).
\end{align*}
We linearize the second equation around $y=0$ and find
\begin{equation}\label{parasites-lin}
\dot{y} = - A_\sk^{-1} H''(x) y + \O(|y|^2 + h).
\end{equation}
Heuristically we would expect exponentially growing parasitic solutions if the matrix $-A_\sk^{-1} H''(x)$ has at least one eigenvalue with positive real part. However, since this matrix is not constant it is difficult to give a general condition for the occurrence of exponentially growing parasites. This has to be investigated on a case-by-case basis.

\subsection{The direct approach}

If one is not interested in the Lagrangian structure of the problem, it might be preferable to use a more direct approach to calculate the modified equation. We demonstrate this method in the case of linear $\alpha$ for our two integrators. For more details we refer to \cite{hairer1999backward}.

\subsubsection*{Midpoint rule}

In the difference equation
\[ \frac{q_{j+1} - q_{j-1}}{2h} = A_\sk^{-1}\left(\frac{1}{2} H'\!\left(\frac{q_{j-1} + q_{j}}{2}\right)^T + \frac{1}{2} H'\!\left(\frac{q_{j} + q_{j+1}}{2}\right)^T \right) \]
we set $q_j = x(t) + (-1)^j y(t)$ and
\begin{align*}
q_{j \pm 1} 
&= x(t \pm h) + (-1)^{j \pm 1} y(t \pm h) \\
&= \left( x(t) \pm h \dot{x}(t) + \frac{h^2}{2} \ddot{x}(t) \pm  \ldots \right)  - (-1)^j \left( y(t) \pm h \dot{y}(t) + \frac{h^2}{2} \ddot{y}(t) \pm \ldots \right).
\end{align*}
It follows that
\[
\frac{q_{j+1} - q_{j-1}}{2h} = \dot{x}(t) - (-1)^j \dot{y}(t) + \O(h^2)
\]
and
\begin{align*}
H' \!\left( \frac{q_j + q_{j \pm 1}}{2} \right) 
&= H' \!\left(x \pm \frac{h}{2}\dot{x} \pm \frac{h}{2} (-1)^{j+1} \dot{y} \right) + \O(h^2)  \\
&= H'(x) \pm \frac{h}{2} H''(x) \left( \dot{x} + (-1)^{j+1} \dot{y} \right) + \O(h^2) .
\end{align*}
Hence
\begin{align*}
\dot{x} - (-1)^j \dot{y} &= A_\sk^{-1} \bigg( H'(x) +  \frac{h}{4} H''(x) \left( \dot{x} + (-1)^{j+1} \dot{y} \right) -  \frac{h}{4} H''(x) \left( \dot{x} + (-1)^{j+1} \dot{y} \right) \bigg)^T \\
&\hspace{11cm}  + \O(h^2) \\
&= A_\sk^{-1} H'(x)^T  + \O(h^2).
\end{align*}
Separating the alternating terms from the rest, we find
\begin{align*}
\dot{x} &= A_\sk^{-1} H'(x)^T + \O(h^2) , \\
\dot{y} &= 0+ \O(h^2).
\end{align*}
Unsurprisingly, we find the same system of modified equations as with the Lagrangian method.

\subsubsection*{Trapezoidal rule}

Now we consider the difference equation
\[ \frac{q_{j+1} - q_{j-1}}{2h} = A_\sk^{-1}H'(q_j)^T \]
and make the same identifications as before. We find
\begin{align*}
\dot{x} - (-1)^j \dot{y} &= A_\sk^{-1} H'(x + (-1)^j y)^T  + \O(h^2)\\
&= A_\sk^{-1} H'(x)^T + (-1)^j A_\sk^{-1} H''(x) y + \O(y^2 + h^2).
\end{align*}
If we assume that $y = \O(h)$, then the system of modified equations is
\begin{align*}
\dot{x} &= A_\sk^{-1} H'(x)^T + \O(h^2) , \\
\dot{y} &= -A_\sk^{-1} H''(x) y + \O(h^2).
\end{align*}

\section{Examples}\label{sect-ex}

To illustrate the theory above, we apply our two integrators to two examples. Since the calculations tend to be quite long in real-world problems, we start with a minimal toy problem. After that, we discuss the dynamics of point vortices in the plane.

\subsection{Toy Problem}

Consider the Lagrangian 
\[ \cL(p,q,\dot{p},\dot{q}) = \frac{1}{2}(p\dot{q}-q\dot{p})-U(p)-V(q) \]
on $T\mathbb{R}^2$. Its Euler-Lagrange equations are
\[ \dot{p} = -V'(q) \qquad \text{and} \qquad \dot{q} = U'(p). \]
As a concrete example, the choice $V(q)= - \cos(q)$ and $U(p)=\frac{1}{2}p^2$ describes the pendulum.

\subsubsection*{Midpoint rule}

We have
\begin{align*}
L_\disc(p_j,q_j,p_{j+1},q_{j+1},h) &= \frac{1}{2} \left( \frac{p_j+p_{j+1}}{2}\frac{q_{j+1}-q_j}{h} - \frac{q_j+q_{j+1}}{2}\frac{p_{j+1}-p_j}{h} \right) \\
 &\quad - U\!\left( \frac{p_j + p_{j+1}}{2} \right) - V\!\left( \frac{q_j + q_{j+1}}{2} \right) .
\end{align*}
This corresponds to the following system of difference equations:
\begin{align*}
\frac{q_{j+1} - q_{j-1}}{2h} &= \frac{1}{2} U' \!\left( \frac{p_{j-1} + p_j}{2} \right) + \frac{1}{2} U' \!\left( \frac{p_j + p_{j+1}}{2} \right) , \\
\frac{p_{j+1} - p_{j-1}}{2h} &= -\frac{1}{2} V' \!\left( \frac{q_{j-1} + q_j}{2} \right) - \frac{1}{2} V' \!\left( \frac{q_j + q_{j+1}}{2} \right) .
\end{align*}
By Taylor expansion we obtain
\begin{align*}
\cL_\disc([p,q],h) &= \cL(p,q,\dot{p},\dot{q}) \\
&\quad + \frac{h^2}{24} \left( \frac{1}{2} \left( pq^{(3)} + 3 \ddot{p}\dot{q} - 3 \dot{p}\ddot{q} - p^{(3)}q \right) - 3 U'\ddot{p} - 3 V'\ddot{q} \right) + \O(h^4).
\end{align*}
It follows that
\[ \cL_\mesh([p,q],h) = \cL(p,q,\dot{p},\dot{q}) + \frac{h^2}{24} \left( 2\ddot{p}\dot{q} - 2\dot{p}\ddot{q} - 2U'\ddot{p} + U''\dot{p}^2 - 2V'\ddot{q} + V''\dot{q}^2 \right) + \O(h^4) . \]
Its Euler-Lagrange equations are
\begin{align*}
0 &= \der{\cL_\mesh}{p} - \frac{\d}{\d t}\der{\cL_\mesh}{\dot{p}} 
= \dot{q} - U' + \frac{h^2}{24} \left( 2 q^{(3)} - U^{(3)}\dot{p}^2 - 4 U''\ddot{p} \right) + \O(h^4), \\
0 &= \der{\cL_\mesh}{q} - \frac{\d}{\d t}\der{\cL_\mesh}{\dot{q}} 
= -\dot{p} - V' + \frac{h^2}{24} \left( -2 p^{(3)} - V^{(3)}\dot{q}^2 - 4 V''\ddot{q} \right) + \O(h^4).
\end{align*}
Solving for $q$ and $\dot{q}$ we find the principal modified equations
\begin{align*}
\dot{q} &= U' - \frac{h^2}{24} \left( U^{(3)}V'^2 + 2 U''V''U' \right) + \O(h^4) ,\\
\dot{p} &= -V' + \frac{h^2}{24} \left( V^{(3)}U'^2 + 2 V''U''V' \right) + \O(h^4) .
\end{align*}
Eliminating higher derivatives in $\cL_\mesh$ we find
\[ \cL_\mod(p,q,\dot{p},\dot{q},h) = \cL(p,q,\dot{p},\dot{q}) + \frac{h^2}{24} \left( - V'' \dot{q}^2 - U'' \dot{p}^2 + 2 U'V''\dot{q} - 2 V'U''\dot{p} \right) + \O(h^4) . \]
As discussed in the previous section we do not expect parasitic solutions with this method (see Figure \ref{fig-pendulum}). 

\subsubsection*{Trapezoidal rule}

We have
\begin{align*}
L_\disc(p_j,q_j,p_{j+1},q_{j+1},h) &= \frac{1}{2} \left( \frac{p_j+p_{j+1}}{2}\frac{q_{j+1}-q_j}{h} - \frac{q_j+q_{j+1}}{2}\frac{p_{j+1}-p_j}{h} \right) \\
 &\quad - \frac{1}{2} U(p_j) - \frac{1}{2} U(p_{j+1}) - \frac{1}{2} V(q_j) - \frac{1}{2} V(q_{j+1}) .
\end{align*}
The corresponding discrete Euler-Lagrange equations are
\begin{align*}
\frac{q_{j+1} - q_{j-1}}{2h} = U'(p_j) , \qquad
\frac{p_{j+1} - p_{j-1}}{2h} = -V'(q_j) .
\end{align*}
By Taylor expansion we obtain
\begin{align*}
\cL_\disc([p,q],h) &= \cL(p,q,\dot{p},\dot{q}) \\
&\quad + \frac{h^2}{24} \left( \frac{1}{2} \left( pq^{(3)} + 3 \ddot{p}\dot{q} - 3 \dot{p}\ddot{q} - p^{(3)}q \right) - 3 U'\ddot{p} - 3 U''\dot{p}^2 - 3 V'\ddot{q} - 3 V''\dot{q}^2 \right) \\
&\hspace{10.8cm} + \O(h^4).
\end{align*}
It follows that
\[ \cL_\mesh([p,q],h) = \cL(p,q,\dot{p},\dot{q}) + \frac{h^2}{12} \left( \ddot{p}\dot{q} - \dot{p}\ddot{q} - U'\ddot{p} - U''\dot{p}^2 - V'\ddot{q} - V''\dot{q}^2 \right) + \O(h^4) . \]
Its Euler-Lagrange equations are
\begin{align*}
0 &= \der{\cL_\mesh}{p} - \frac{\d}{\d t}\der{\cL_\mesh}{\dot{p}} 
= \dot{q} - U' + \frac{h^2}{12} \left( q^{(3)} + U^{(3)}\dot{p}^2 + U''\ddot{p} \right) + \O(h^4), \\
0 &= \der{\cL_\mesh}{q} - \frac{\d}{\d t}\der{\cL_\mesh}{\dot{q}} 
= -\dot{p} - V' + \frac{h^2}{12} \left( -p^{(3)} + V^{(3)}\dot{q}^2 + V''\ddot{q} \right) + \O(h^4).
\end{align*}
Solving for $q$ and $\dot{q}$ we find the principal modified equations
\begin{align*}
\dot{q} &= U' - \frac{h^2}{6} \left( U^{(3)}V'^2 - U''V''U' \right) + \O(h^4) ,\\
\dot{p} &= -V' + \frac{h^2}{6} \left( V^{(3)}U'^2 - V''U''V' \right) + \O(h^4) .
\end{align*}
Eliminating higher derivatives in $\cL_\mesh$ we find
\[ \cL_\mod(p,q,\dot{p},\dot{q},h) = \cL(p,q,\dot{p},\dot{q}) + \frac{h^2}{12} \left( -2 V'' \dot{q}^2 -2 U'' \dot{p}^2 + U'V''\dot{q} - V'U''\dot{p} \right) + \O(h^4) . \]

For the pendulum, $V(q)= - \cos(q)$ and $U(p)=\frac{1}{2}p^2$, we have
\[
A = \frac{1}{2}
\begin{pmatrix}
0 & 1 \\
-1 & 0
\end{pmatrix}
\qquad \text{and} \qquad 
H'' = 
\begin{pmatrix}
U''(p) & 0 \\
0 & V''(q)
\end{pmatrix}
 = 
\begin{pmatrix}
1 & 0 \\
0 & \cos(q)
\end{pmatrix}, \]
hence the matrix in Equation \eqref{parasites-lin} is
\[ -A_\sk^{-1} H'' = \begin{pmatrix}
0 & -\cos(q) \\
1 & 0
\end{pmatrix}.\]
This matrix has a pair of real eigenvalues if $\cos(q) < 0$ and a pair of purely imaginary eigenvalues if $\cos(q) > 0$. This suggests (but does not prove; $q$ is not constant) that exponentially growing parasites occur in the regions where $\cos(q) < 0$. 

In the top right image of Figure \ref{fig-pendulum} one clearly observes parasitic solutions for this method. Note the parasites only seem to grow where $|q| > \frac{\pi}{2}$, i.e.\@ where $\cos(q) < 0$. In the region where $|q| < \frac{\pi}{2}$ there is no noticeable growth in the amplitude of the oscillations. Instead we observe a rotation in the direction of the oscillations, as expected when the eigenvalues are purely imaginary. This is visualized in Figure \ref{fig-pendulum} by line segments connecting the points of the discrete solution with the corresponding points on the solution of the principal modified equation.

When the initial conditions are chosen such that $q$ remains in the stable region $|q| < \frac{\pi}{2}$ no parasites are observed (bottom right image of Figure \ref{fig-pendulum}), even if the simulation is continued for many periods (not pictured).

\begin{figure}[ht]
\includegraphics[width=.47\linewidth]{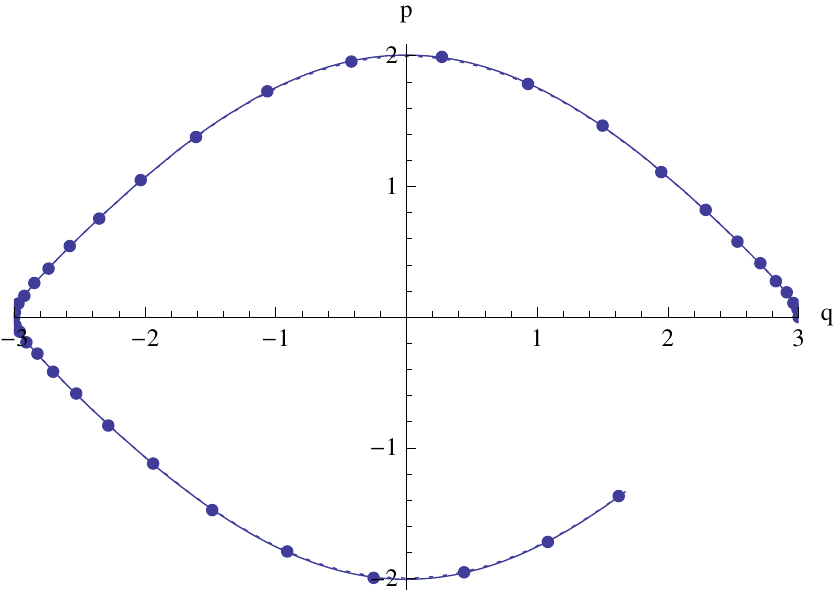}
\hfill
\includegraphics[width=.47\linewidth]{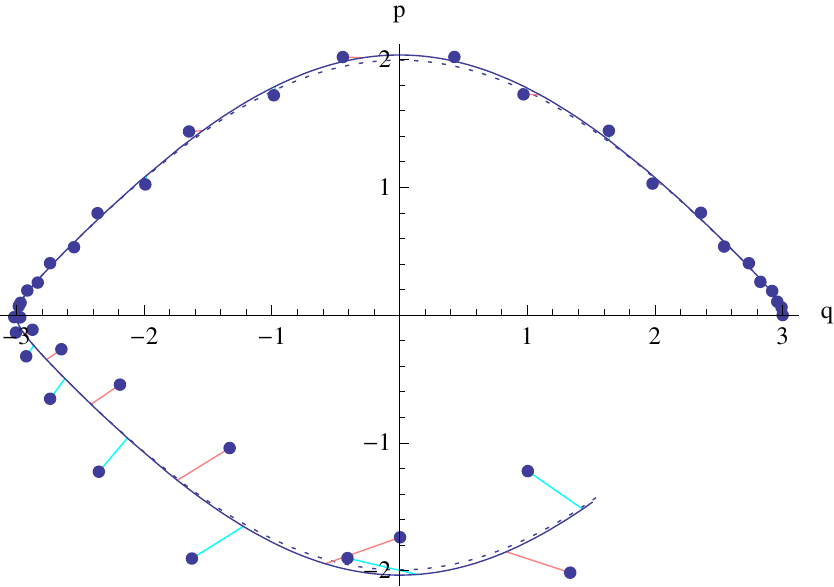}
\bigskip

\includegraphics[width=.47\linewidth]{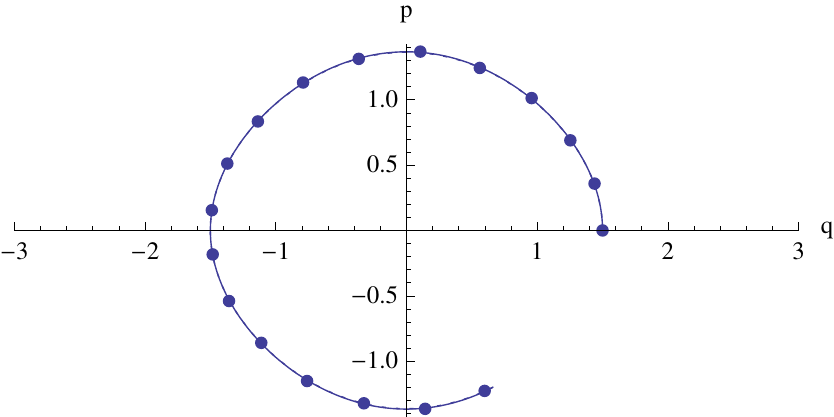}
\hfill
\includegraphics[width=.47\linewidth]{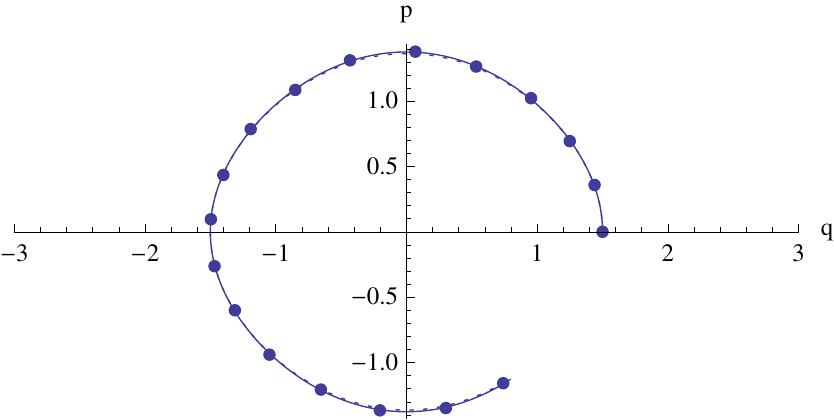}

\caption{Pendulum with midpoint rule (left) and trapezoidal rule (right), both with step size $h=0.35$ and initial point $(3,0)$ (top) and $(1.5,0)$ (bottom).\\[1mm]
Dashed curve: exact solution. \\[1mm]
Bullets: discrete solution. \\[1mm]
Solid curve: solution of the second truncation of the principal modified equation.\\[1mm]
Line segments: visualization of parasitic oscillations}
\label{fig-pendulum}
\end{figure}

\subsection{Point vortices}
\label{sect-vortex}

Our second example involves vortices on a planar surface. If all vorticity is contained in a finite number of points, then the movement of those points is described by first order ODEs \cite{newton2013n,rowley2002variational}. To be precise, the dynamics of $N$ point vortices in the (complex) plane is described by the Lagrangian
\[ \cL(z,\ol{z},\dot{z},\dot{\ol{z}}) = \sum_{j=1}^N \Gamma_j \im( \ol{z}_j \dot{z}_j ) - \frac{1}{\pi} \sum_{j=1}^N \sum_{k=1}^{j-1} \Gamma_j \Gamma_k \log \big| z_j-z_k \big|, \]
where $z_j$ and $\Gamma_j$ are the position and circulation of the $j$-th vortex, and the bar denotes the complex conjugate. The equations of motion are
\[
\dot{z}_j = \frac{i}{2 \pi} \sum_{k \neq j} \frac{\Gamma_k}{\ol{z}_j - \ol{z}_k} \qquad \text{for } j = 1, \ldots, N.
\]
It follows that
\begin{equation}\label{diffvortex}
\ddot{z}_j 
= \frac{i}{2 \pi} \sum_{k \neq j} \frac{-\Gamma_k}{\left( \ol{z}_j - \ol{z}_k \right)^2} \left( \dot{\ol{z}}_j - \dot{\ol{z}}_k \right)  .
\end{equation}

\subsubsection*{Midpoint rule}

We have
\begin{align*}
&\cL_\disc([z,\ol{z}],h) \\
&= \cL(z,\ol{z},\dot{z},\dot{\ol{z}})
+ \frac{h^2}{24} \left[ \sum_{j=1}^N \Gamma_j \im\!\left( 3 \dot{z}_j \ddot{\ol{z}}_j + z^{(3)}_j \ol{z}_j \right) 
- \sum_{j=1}^N \sum_{k=1}^{j-1} \frac{3 \Gamma_j \Gamma_k}{\pi} \re \!\left( \frac{\ddot{z}_j - \ddot{z}_k}{z_j-z_k} \right) \right] + \O(h^4)
\end{align*}
and
\begin{align*}
&\cL_\mesh([z,\ol{z}],h) \\
&= \cL(z,\ol{z},\dot{z},\dot{\ol{z}}) 
+ \frac{h^2}{24} \left[ 
4 \sum_{j=1}^N \Gamma_j \im \!\left( \dot{z}_j \ddot{\ol{z}}_j \right)\!
- \sum_{j=1}^N \sum_{k=1}^{j-1} \frac{\Gamma_j \Gamma_k}{\pi} \re \!\left( 2\frac{\ddot{z}_j - \ddot{z}_k}{z_j-z_k} + \left( \frac{\dot{z}_j-\dot{z}_k}{z_j-z_k} \right)^2 \right)\! \right] \\
&\hspace{13cm} + \O(h^4).
\end{align*}
To obtain the modified Lagrangian we evaluate the second derivatives in $\cL_\mesh$ using the leading order equation \eqref{diffvortex}. We find
\begin{align*}
\sum_{j=1}^N \Gamma_j \im\!\left( \dot{z}_j \ddot{\ol{z}}_j \right) 
&= \sum_{j=1}^N \sum_{k \neq j} \Gamma_j \im \!\left( \dot{z}_j \frac{i}{2 \pi} \frac{\Gamma_k}{(z_j-z_k)^2} \big(\dot{z}_j-\dot{z}_k\big) \right) + \O(h^2) \\  
&= \sum_{j=1}^N \sum_{k \neq j} \frac{\Gamma_j \Gamma_k}{2\pi} \re \!\left( \dot{z}_j  \frac{\dot{z}_j-\dot{z}_k}{(z_j-z_k)^2} \right) + \O(h^2)  \\   
&= \sum_{j=1}^N \sum_{k \neq j} \frac{\Gamma_j \Gamma_k}{4\pi} \re\!\left(   \frac{\big(\dot{z}_j-\dot{z}_k\big)^2}{(z_j-z_k)^2} \right) + \O(h^2) 
\end{align*}
and
\begin{align*}
\sum_{j=1}^N \sum_{k \neq j} \Gamma_j \Gamma_k \re \!\left( 2\frac{\ddot{z}_j - \ddot{z}_k}{z_j-z_k} \right)
&= 4 \sum_{j=1}^N \sum_{k \neq j} \Gamma_j \Gamma_k \re\!\left( \frac{\ddot{z}_j}{z_j-z_k} \right) + \O(h^2)  \\
&= 4 \sum_{j=1}^N \sum_{k \neq j}\sum_{\ell \neq j} \Gamma_j \Gamma_k  \re\!\left( \frac{-i}{2\pi} \frac{\Gamma_\ell \big(\dot{\ol{z}}_j-\dot{\ol{z}}_\ell\big)}{(z_j-z_k)\big(\ol{z}_j-\ol{z}_\ell\big)^2} \right) + \O(h^2) \\
&= \frac{1}{\pi} \sum_{j=1}^N \sum_{k \neq j}\sum_{\ell \neq j} \Gamma_j \Gamma_k \Gamma_\ell \im\!\left( \frac{ \big(\dot{\ol{z}}_j-\dot{\ol{z}}_\ell\big)}{(z_j-z_k)\big(\ol{z}_j-\ol{z}_\ell\big)^2} \right) + \O(h^2) .
\end{align*}
Therefore,
\begin{align*}
\cL_\mod(z,\ol{z},\dot{z},\dot{\ol{z}},h) &= \cL(z,\ol{z},\dot{z},\dot{\ol{z}})
 + \frac{h^2}{24} \left[ 
\frac{1}{2\pi} \sum_{j=1}^N \sum_{k \neq j} \Gamma_j \Gamma_k \re\!\left(  \left( \frac{\dot{z}_j-\dot{z}_k}{z_j-z_k} \right)^2 \right) \right. \\
&\hspace{1cm}\left.
- \frac{1}{\pi^2} \sum_{j=1}^N \sum_{k \neq j} \sum_{\ell \neq j} \Gamma_j \Gamma_k \Gamma_\ell \im\!\left( \frac{ \big(\dot{\ol{z}}_j-\dot{\ol{z}}_\ell\big)}{(z_j-z_k)\big(\ol{z}_j-\ol{z}_\ell\big)^2} \right) \right]
 + \O(h^4).
\end{align*}

\begin{figure}[p]
\includegraphics[width=\linewidth]{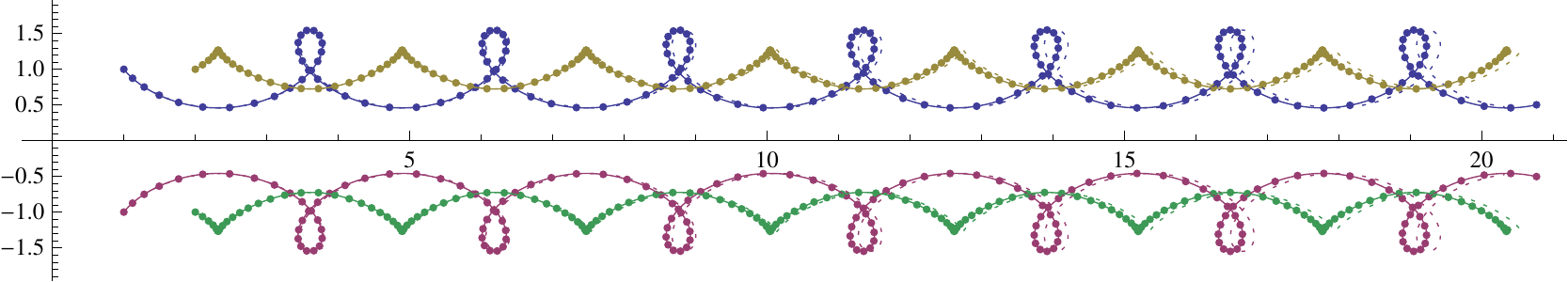}
\caption{Leapfrogging vortex pairs with the midpoint rule. No parasitic behavior is visible.}
\label{fig-vortex-mp}
\end{figure}
\begin{figure}[p]
\includegraphics[width=\linewidth]{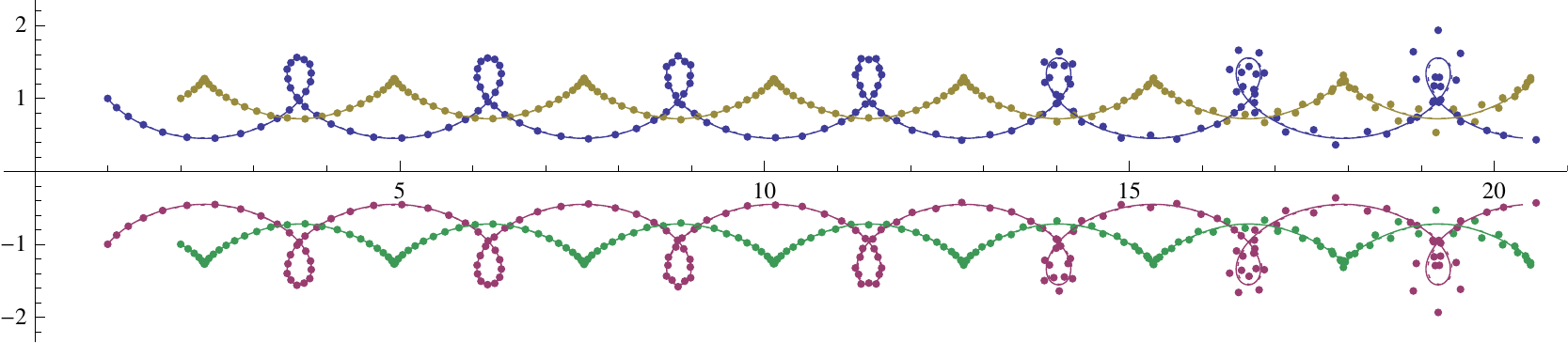}
\caption{Leapfrogging vortex pairs with the trapezoidal rule. One observes parasitic oscillations.}
\label{fig-vortex-sv}
\end{figure}
\begin{figure}[p]
\includegraphics[width=.45\linewidth]{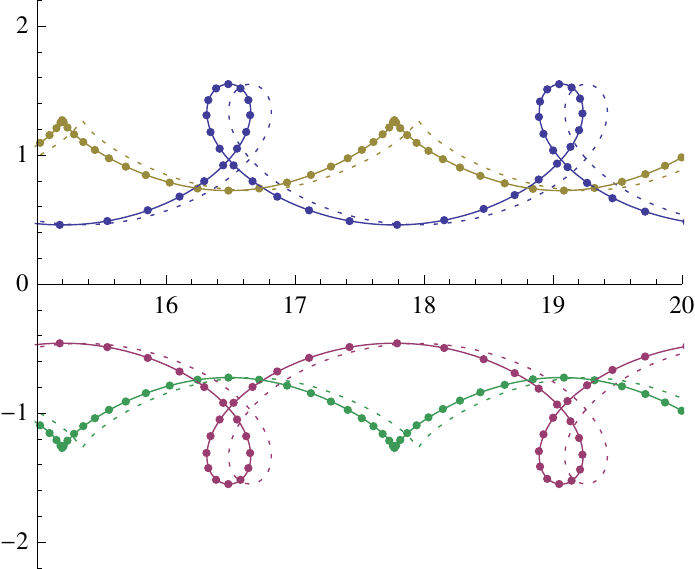}
\hfill
\includegraphics[width=.45\linewidth]{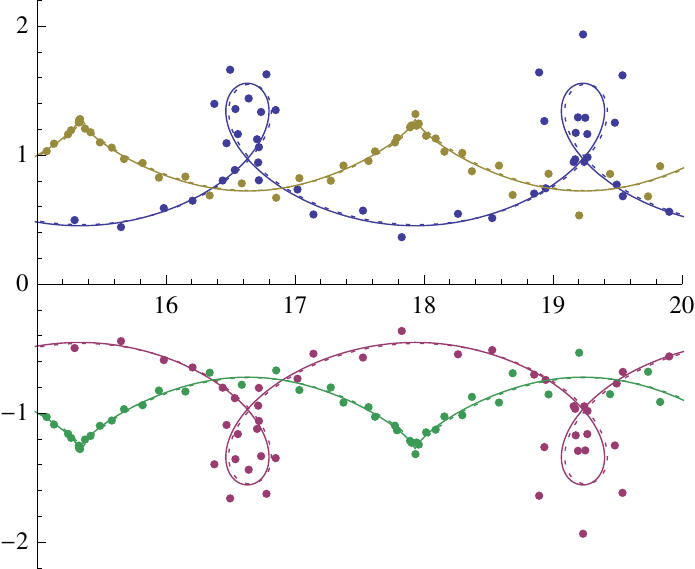}
\caption{Enlarged versions of the right hand sections of Figures \ref{fig-vortex-mp} and \ref{fig-vortex-sv}: midpoint rule (left) and trapezoidal rule (right).}
\label{fig-vortex-zoom}
\end{figure}
\begin{figure}[p]%
\hspace{-2mm}%
\begin{tabular}{lrl}
\textbf{Legend:} & Dashed curves & \emph{exact solution.}\\
& Bullets & \emph{discrete solution.} \\
& Solid curves & \emph{solution of the truncated principal modified equation.}\\
\\
\textbf{Parameters:} & Initial positions & \emph{$(1,1)$, $(1,-1)$, $(2,1)$, and $(2,-1)$.}\\
& Vortex strengths & \emph{$1$, $-1$, $2$, $-2$, respectively.}\\
& Time interval & \emph{$0 \leq t \leq 80$.}
\end{tabular}
\end{figure}

\subsubsection*{Trapezoidal rule}

For the Trapezoidal rule, we find in the same way that
\begin{align*}
& \cL_\mesh([z,\ol{z}],h) \\
&= \cL(z,\ol{z},\dot{z},\dot{\ol{z}}) 
+ \frac{h^2}{24} \left[ 
4 \sum_{j=1}^N \Gamma_j \im\!\left( \dot{z}_j \ddot{\ol{z}}_j \right) 
- 2 \sum_{j=1}^N \sum_{k=1}^{j-1} \frac{\Gamma_j \Gamma_k}{\pi} \re\!\left( \frac{\ddot{z}_j - \ddot{z}_k}{z_j-z_k} - \left( \frac{\dot{z}_j-\dot{z}_k}{z_j-z_k} \right)^2 \right) \right] \\
&\hspace{13cm} + \O(h^4)
\end{align*}
and
\begin{align*}
\cL_\mod(z,\ol{z},\dot{z},\dot{\ol{z}},h) &= \cL(z,\ol{z},\dot{z},\dot{\ol{z}})
 + \frac{h^2}{24} \left[ 
\frac{2}{\pi} \sum_{j=1}^N \sum_{k \neq j} \Gamma_j \Gamma_k \re\!\left(  \left( \frac{\dot{z}_j-\dot{z}_k}{z_j-z_k} \right)^2 \right) \right. \\
&\hspace{1cm}\left.
- \frac{1}{\pi^2} \sum_{j=1}^N \sum_{k \neq j} \sum_{\ell \neq j} \Gamma_j \Gamma_k \Gamma_\ell \im\!\left( \frac{ \big(\dot{\ol{z}}_j-\dot{\ol{z}}_\ell\big)}{(z_j-z_k)\big(\ol{z}_j-\ol{z}_\ell\big)^2} \right) \right]
 + \O(h^4).
\end{align*}

In Figures \ref{fig-vortex-mp}\---\ref{fig-vortex-zoom} we observe parasitic solutions for the trapezoidal rule, but not for the midpoint rule, where the solution of the principal modified equation shows excellent agreement with the discrete solution.

\section{Conclusion}

We have described a Lagrangian algorithm to calculate the modified equation of a variational integrator applied to a degenerate continuous Lagrangian that is linear in the velocities. To obtain the principal modified equation this was a straightforward adaptation of the procedure developed for non-degenerate Lagrangians. To obtain the full system of modified equations we doubled the dimension of the discrete system in a suitable way. As a consequence, we proved that the system of modified equations is variational. We have illustrated the construction of modified Lagrangians and the possible issue of parasitic solutions with examples. Our construction is potentially useful to create more accurate variational integrators, for example in the spirit of \citep{torre2016benefits,torre2017surrogate}.

\bigskip\noindent\textbf{Acknowledgment.} The author is funded by the DFG Collaborative Research Center SFB/TRR 109 ``Discretization in Geometry and Dynamics''.

\bibliographystyle{abbrv}
\bibliography{deglag}

\end{document}